\documentclass[12pt]{article}
\usepackage{amsmath, amscd, amssymb, latexsym, epsfig, color, amsthm}

\setlength{\textwidth}{6.5in}
\setlength{\textheight}{8.6in}
\setlength{\topmargin}{0pt}
\setlength{\headsep}{0pt}
\setlength{\headheight}{0pt}
\setlength{\oddsidemargin}{0pt}
\setlength{\evensidemargin}{0pt}
\flushbottom

\pagestyle{plain}

\newtheorem{theorem}{Theorem}[section]
\newtheorem{proposition}[theorem]{Proposition}
\newtheorem{question}[theorem]{Question}
\newtheorem{corollary}[theorem]{Corollary}
\newtheorem{conjecture}[theorem]{Conjecture}
\newtheorem{lemma}[theorem]{Lemma}

\theoremstyle{definition}
\newtheorem{defn}[theorem]{Definition}

\newtheorem{remark}[theorem]{Remark}
\newenvironment{proofof}[1]
	{\begin{trivlist}\item {\it Proof of {#1}.}}{\qed\end{trivlist}}
\newenvironment{proofof*}[1]
	{\begin{trivlist}\item {\it Proof of {#1}.}}{\end{trivlist}}
\newenvironment{case}[1]{\begin{trivlist}\item {\bf #1}}{\end{trivlist}}

\def\N{\mathbb{N}}

\def\Z{\mathbb{Z}}
\def\m{\mathfrak{m}}

\def\V{\mathbf{V}}
\def\W{\mathbf{W}}

\def\M{\mathcal{M}}
\def\S{\mathcal{S}}

\newcommand\lk{\mbox{\upshape lk}\,}
\newcommand\inc{\iota}

\newcommand\local[2]{_{#2}(#1)}
\newcommand\kk{\Bbbk}
\newcommand\mm{\mathfrak{m}}
\newcommand\pp{\mathfrak{p}}
\newcommand\xx{\mathbf{x}}
\newcommand\EE{\mathcal{E}}
\newcommand\FF{\mathcal{F}}
\newcommand\NN{\mathbb{N}}
\newcommand\OO{\mathcal{O}}
\newcommand\PP{\mathbb{P}}
\newcommand\ZZ{\mathbb{Z}}

\newcommand\spot{{\raisebox{.25ex}{\tiny$\scriptscriptstyle\bullet$}}}
\newcommand\nothing{\varnothing}
\renewcommand\iff{\Leftrightarrow}


\DeclareMathOperator\gr{\mathrm{gr}}

\DeclareMathOperator\ext{\mathrm{Ext}}
\DeclareMathOperator\tor{\mathrm{Tor}}
\DeclareMathOperator\Hom{\mathrm{Hom}}
\DeclareMathOperator\Ext{\mathcal{E}\!\mathit{xt}}

\DeclareMathOperator\cost{\mathrm{cost}}
\DeclareMathOperator\coker{\mathrm{coker}}

\begin{document}

\title{\vspace{-1ex}Face rings of simplicial complexes with singularities}
\author{
Ezra Miller
\thanks{Research partially supported by NSF CAREER grant DMS-0449102}\\
\small Mathematics Department, Box 90320\\[-.8ex]
\small Duke University, Durham, NC 27707, USA\\[-.8ex]
\small \texttt{ezra@math.duke.edu}
\and Isabella Novik
\thanks{Research partially supported by Alfred P.~Sloan Research
Fellowship and NSF grant DMS-0801152}\\
\small Department of Mathematics, Box 354350\\[-0.8ex]
\small University of Washington, Seattle, WA 98195-4350, USA\\[-0.8ex]
\small \texttt{novik@math.washington.edu}
\and Ed Swartz
\thanks{Research partially supported by NSF grant DMS-0900912}\\
\small Department of Mathematics, \\[-0.8ex]
\small Cornell University, Ithaca NY, 14853-4201, USA\\[-0.8ex]
\small \texttt{ebs22@cornell.edu}
\date{15 January 2010}
}

\maketitle

\begin{abstract}
The face ring of a simplicial complex modulo $m$ generic linear forms
is shown to have finite local cohomology if and only if the link of
every face of dimension~$m$ or more is \emph{nonsingular},
i.e., has the homology of a wedge of spheres of the expected
dimension.  This is derived from an enumerative result for local
cohomology of face rings modulo generic linear forms, as compared with
local cohomology of the face ring itself. The enumerative result is
generalized in slightly weaker form to squarefree modules.  A concept
of \emph{Cohen--Macaulay in codimension $c$} is defined and
characterized for arbitrary finitely generated modules and coherent
sheaves. For the face ring of an $r$-dimensional complex $\Delta$, it
is equivalent to nonsingularity of $\Delta$ in dimension $r-c$; for a
coherent sheaf on projective space, this condition is shown to be
equivalent to the same condition on any single generic hyperplane
section.  The characterization of nonsingularity in dimension~$m$ via
finite local cohomology thus generalizes from face rings to arbitrary
graded modules.
\end{abstract}

\noindent{\it 2010 Mathematics Subject Classification.\hspace{-1.2pt}}
13F55,~05E40,~05E45,~13H10,~14M05,~13D45,~13C14

\medskip \noindent{\em Keywords:}
simplicial complex,
Stanley--Reisner (face) ring,
Cohen--Macaulay,
local cohomology,
graded module,
hyperplane section,
squarefree module

\section{Introduction}

In the 1970's Reisner (building on unpublished work of Hochster) and
Stanley revolutionized the study of face enumeration of simplicial
complexes through their use of the \emph{face ring}, also called the
\emph{Stanley--Reisner ring}.  Reisner proved that the face ring of a
simplicial complex is Cohen--Macaulay if and only if the link of every
face in the simplicial complex, including the empty face, is
\emph{nonsingular}, by which we mean that all reduced cohomology
groups, except possibly in the maximum dimension,
vanish~\cite{Reisner}.  Stanley used this to completely characterize
the $f$-vectors of Cohen--Macaulay complexes~\cite{St77}.

A natural question which follows these results is, ``What happens if
singularities are allowed?''  The weakest relaxation possible is to
permit nontrivial cohomology of the whole complex (the link of the
empty face) in lower dimensions.  Schenzel proved that for pure
complexes, the face ring is Buchsbaum if and only if this is the only
additional cohomology of links \cite{Sch}.  The primary tool used in
the proof of both Reisner's and Schenzel's theorems is local
cohomology of the face ring with respect to the irrelevant ideal.  In
the Cohen--Macaulay case  all but the top local cohomology modules vanish, 
while in the Buchsbaum case these modules have finite
dimension as vector spaces.  Rings with this property, that is, those
whose local cohomology modules below their Krull dimension have finite
dimension, are called \emph{generalized Cohen--Macaulay rings} or
\emph{rings with finite local cohomology}.

Our first goal is to extend these ideas to arbitrary singularities.
The main result along simplicial lines, Theorem~\ref{main}, says that
if the dimension of the singular set is $m-1$, then except for top
cohomological degree, all of the local cohomology modules of the face
ring modulo $m$ generic linear forms have finite dimension and vanish
outside of $\Z$-graded degrees $i$ for $0 \le i \le m$.
Theorem~\ref{main} is verified easily after Theorem~\ref{isomorphism}.
Theorem~\ref{isomorphism} is
an enumerative result for Hilbert series of local cohomology of face
rings modulo generic linear forms proved using a combinatorial description
of the finely graded module structure of the local cohomology $H^i_\m
(\kk[\Delta])$ due to Gr\"abe \cite{Grabe}, which we review in
Section~\ref{s:grabe}.  We treat isolated singularities
in~Section~\ref{isolated sings}, and the general case in
Section~\ref{proof}.

Theorem~\ref{main} is not limited to a simplicial phenomenon; it is an
instance of a rather general phenomenon in commutative algebra and
algebraic geometry, stated and proved here in the language of sheaves
on projective schemes as Theorem~\ref{t:codim}, and reinterpreted in
terms of commutative algebra in Theorem~\ref{t:algebraic}.  The first
observation, made precise in Theorem~\ref{t:singdim}, is that
singularity in a fixed dimension is equivalent to a condition that we
call \emph{Cohen--Macaulay in a fixed codimension}: roughly speaking,
the local rings at all points of some fixed codimension are
Cohen--Macaulay.  At first sight, this condition sounds like Serre's
condition~$S_k$, which has been treated in combinatorial settings such
as affine semigroup rings and toric varieties~\cite{SchSch}, but it is
subtly different; see Remark~\ref{r:module}.  The second, harder and
deeper homological observation is that the Cohen--Macaulay condition
in a fixed codimension satisfies a Bertini-type persistence under
generic hyperplane section, but also the converse: if a single generic
hyperplane section of a coherent sheaf is Cohen--Macaulay in
codimension~$c$, then so is the original sheaf; this is made precise
in Theorem~\ref{t:codim}.

Having seen in Section~\ref{modules} that Theorem~\ref{main} extends
beyond the simplicial realm, we do the same for the enumerative result
from which it follows: in Theorem~\ref{t:sqfree}, we extend
Theorem~\ref{isomorphism} to the squarefree modules introduced by
Yanagawa~\cite{yanagawa}.

\section{Singularity dimension and finite local cohomology}

For all undefined terminology we refer our readers to \cite{BrHe, cca,
St96}.  Throughout, $\Delta$ is a simplicial complex of dimension $d -
1$ with vertex set $[n]=\{1, \dots, n\}$, and $\kk$ is a field.  If $F
\in \Delta$ is a face, then the {\it link} of $F$ is
$$%
  \lk F = \{G \in \Delta: F \cap G = \nothing, F \cup G \in \Delta\}.
$$
In particular, $\lk \nothing = \Delta$.

\begin{defn}
The face $F$ is {\it nonsingular} with respect to~$\kk$ if the reduced
cohomology $\tilde H^i(\lk F; \kk)$ of the link of~$F$ with
coefficients in~$\kk$ vanishes for all $i < d-1-|F|$.  Otherwise $F$
is a {\it singular} face.  The {\it singularity dimension} of $\Delta$
is the maximum dimension of a singular face.  If $\Delta$ has no
singular faces, then $\Delta$ is {\it Cohen--Macaulay} (over $\kk$)
and we (arbitrarily) declare the singularity dimension of the complex
to be $-\infty$.
\end{defn}

For a field $\kk$, which we assume is infinite but of arbitrary
characteristic, the {\it face ring} of~$\Delta$ (also known as the
{\it Stanley--Reisner} ring) is
$$%
  \kk[\Delta] = \kk [x_1, \dots, x_n]/I_\Delta,
$$
where the \emph{Stanley--Reisner ideal} is
$$%
   I_\Delta = ( x_{i_1}\cdots x_{i_k}: \{i_1, \dots, i_k\} \notin \Delta).
$$

For any module $M$ over the polynomial ring $S = \kk[x_1, \ldots,
x_n]$, we use $H^i_\m(M)$ to denote the $i$-th local cohomology module
of $M$ with respect to the irrelevant ideal $\m = (x_1, \ldots, x_n)$.

\begin{defn}
Let $M$ be an $S$-module of Krull dimension~$d$.  Then $M$ is a {\it
module with finite local cohomology} (or a {\it generalized
Cohen--Macaulay module}) if $H^i_\m(M)$ has finite dimension as a
$\kk$-vector space for all $i < d$.
\end{defn}

Modules with finite local cohomology were introduced in \cite{CST},
\cite{StVo}, and \cite{Trung}.  Connections between face rings and
modules with finite local cohomology have been studied in
\cite{GotoYuk} and~\cite{Yuk}.

Here we show that the connection between algebraic properties of
$\kk[\Delta]$ and the singularities of the complex is given by the
following.

\begin{theorem}\label{main}%
The singularity dimension of a simplicial complex~$\Delta$ is less
than~$m$ if and only if for all sets $\{\theta_1, \dots, \theta_m\}$
of $m$ generic linear forms the quotient $\kk[\Delta]/(\theta_1,
\dots, \theta_m)$ is a ring with finite local cohomology.
\end{theorem}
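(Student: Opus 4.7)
The plan is to derive Theorem~\ref{main} from the enumerative identity in Theorem~\ref{isomorphism}. Set $R:=\kk[\Delta]/(\theta_1,\dots,\theta_m)$ for generic linear forms $\theta_1,\dots,\theta_m$; then $R$ has Krull dimension $d-m$, so the conclusion ``$R$ has finite local cohomology'' means $\dim_\kk H^i_\m(R)<\infty$ for every $i<d-m$. The underlying input is the Hochster--Gr\"abe description (reviewed in Section~\ref{s:grabe}):
\[
  H^i_\m(\kk[\Delta])_{\mathbf{a}} \;\cong\; \tilde H^{i-|F|-1}(\lk F;\kk)
\]
for each $\mathbf{a}\in\Z^n$ whose negative support equals $F\in\Delta$. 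Hence nonsingularity of a face $F$ is equivalent to vanishing of all $F$-contributions to $H^i_\m(\kk[\Delta])$ with $i<d$, and the singularity dimension of $\Delta$ is less than $m$ precisely when this vanishing holds for every face with $|F|\ge m+1$.

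Treating Theorem~\ref{isomorphism} as a black box, it expresses the $\Z$-graded Hilbert function of each $H^{i'}_\m(R)$ as a non-negative combinatorial combination of the dimensions $\dim_\kk \tilde H^{i-|F|-1}(\lk F;\kk)$, with the following dichotomy: for a fixed face $F$, the coefficient attached to~$F$ in degree~$j$ is supported in only finitely many~$j$ when $|F|\le m$, but in infinitely many~$j$ (all sufficiently negative) when $|F|\ge m+1$. Modding out by $m$ generic linear forms moreover shifts cohomological degree down by~$m$, so a nonvanishing $F$-contribution at cohomological index~$i$ for $\kk[\Delta]$ with $i<d$ produces a nonvanishing contribution at index $i-m<d-m$ for~$R$.

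The forward implication of Theorem~\ref{main} is then immediate: if every face of dimension $\ge m$ is nonsingular, then for $i'<d-m$ only faces $F$ with $|F|\le m$ contribute to $H^{i'}_\m(R)$, and those contributions live in finitely many $\Z$-graded degrees, so each such $H^{i'}_\m(R)$ is finite-dimensional. Conversely, suppose some face $F^*$ of dimension $\ge m$ is singular, with $\tilde H^{i-|F^*|-1}(\lk F^*;\kk)\neq 0$ for some $i<d$. The dichotomy above forces $H^{i-m}_\m(R)$ to be nonzero in infinitely many $\Z$-graded degrees, and $i-m<d-m$ then contradicts finite local cohomology.

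The principal obstacle is entirely encapsulated in Theorem~\ref{isomorphism}, whose proof will rest on analyzing how multiplication by a generic linear form acts on each $\Z^n$-graded piece of $H^i_\m(\kk[\Delta])$ via the maps between link cohomologies described by Gr\"abe. Once that enumerative identity is established, Theorem~\ref{main} becomes a short bookkeeping deduction translating vanishing of link cohomology into finite-dimensionality of local cohomology of~$R$ below the top degree.
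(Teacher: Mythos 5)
Your proposal is correct and follows the same route as the paper's first proof in Section~5: treat Theorem~\ref{isomorphism} as the key enumerative input, observe that $\binom{i-1}{|F|-m-1}$ vanishes identically for $i>0$ when $|F|\le m$ but is positive for all $i\gg 0$ when $|F|\ge m+1$, dispose of $H^0_\m$ and the non-negative degrees by finiteness, and translate nonvanishing of $H^{\ell+m-1}\local{\Delta}{F}\cong\tilde H^{\ell+m-1-|F|}(\lk F;\kk)$ into nonsingularity. This matches the paper's argument essentially verbatim (the paper also offers a second, sheaf-theoretic proof in Section~7, but you are not required to reproduce that).
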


We give two proofs: the one in Section~\ref{proof} uses simplicial
techniques on local cohomology; the one in Section~\ref{modules}
frames it as a special case of the general theory of modules and
sheaves that are Cohen--Macaulay in a fixed codimension, via
Theorem~\ref{t:singdim}.

\section{Local cohomology of face rings}\label{s:grabe}

This section explains Gr\"abe's~results on the combinatorial structure
of the local cohomology of a face ring as a $\ZZ^n$-graded module over
the polynomial ring $S = \kk[x_1,\ldots,x_n]$.

Denote by $|\Delta|$ the geometric realization of $\Delta$.  For a
face $\tau\in \Delta$, the subcomplex
$$%
  \cost \tau = \{\sigma \in \Delta \, : \sigma\not\supset \tau\}
$$
is the \emph{contrastar} of~$\tau$.  The $i$-th \emph{local
cohomology} of $\Delta$ along~$\tau$ is the simplicial $i$-th
cohomology
$$
  H^i\local{\Delta}{\tau} := H^i(\Delta, \cost \tau)
$$
of the pair $(\Delta, \cost\tau)$ with coefficients in~$\kk$.  For
$\tau \subseteq \sigma \in \Delta$, the inclusion $\inc: \cost\tau \to
\cost\sigma$ induces a contravariant map $\inc^*:
H^i\local{\Delta}{\sigma} \to H^i\local{\Delta}{\tau}$.

For additional notation, an integer vector $U=(u_1, \ldots, u_n)\in
\Z^n$ has \emph{support}
$$%
  s(U) = \{\ell : u_\ell\neq 0\}\subseteq [n]
$$
and sum $|U| = \sum_{\ell=1}^n u_\ell$.  Fix the standard basis
$\{e_\ell\}_{\ell=1}^n$ for $\Z^n$, and write $\N$ for the set of
nonnegative integers.

We consider the $\Z^n$-grading of $\kk[x_1,\ldots, x_n]$ obtained by
declaring the variable $x_\ell$ to have degree~$e_\ell$.  This grading
refines the usual $\Z$-grading and induces a $\Z^n$-grading on
$\kk[\Delta]$ and its local cohomology modules; see
\cite[Chapter~13]{cca}, for example.  Thus, in particular,
$H^i_\m(\kk[\Delta])_j = \bigoplus_{|U|=j} H^i_\m(\kk[\Delta])_U$, and
multiplication by $x_\ell$ is a $\kk$-linear map from
$H^i_\m(\kk[\Delta])_U$ to $H^i_\m(\kk[\Delta])_{U+e_\ell}$ for all
$U\in \Z^n$.

\begin{theorem}[Gr\"abe] \label{Grabe}
The following is an isomorphism of $\Z^n$-graded  $\kk[\Delta]$-modules:
\begin{equation} \label{Hochster}
  H^{i}_\m(\kk[\Delta])\cong
  \bigoplus_{\substack{-U\in \N^n\\s(U)\in\Delta}} \M^{i}_U,
  \quad \text{where} \quad \M^{i}_U=H^{i-1}\local{\Delta}{s(U)},
\end{equation}
and the $\kk[\Delta]$-structure on the $U$-th component of 
the right-hand side is given by 
$$%
\cdot x_\ell = \left\{ \begin{array}{lll}
\text{$0$-map} & \text{if } \ell\notin s(U)\\
\text{identity map} & \text{if } \ell\in s(U) \text{ and } \ell\in s(U+e_\ell)\\
\inc^*: H^{i-1}\local{\Delta}{s(U)} \to H^{i-1}\local{\Delta}{s(U+e_\ell)} &
\text{otherwise}.
\end{array}
\right.
$$
\end{theorem}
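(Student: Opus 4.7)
The plan is to compute $H^i_\mm(\kk[\Delta])$ via the $\ZZ^n$-graded \v{C}ech complex on the regular-in-appearance sequence $x_1,\ldots,x_n$, namely
\[
  \check C^\spot\colon\ 0 \to \kk[\Delta] \to \bigoplus_\ell \kk[\Delta]_{x_\ell}
  \to \bigoplus_{\ell<\ell'}\kk[\Delta]_{x_\ell x_{\ell'}} \to \cdots,
\]
with $\check C^p=\bigoplus_{|F|=p}\kk[\Delta]_{x_F}$ where $x_F=\prod_{\ell\in F}x_\ell$. Since $x_F$ is nilpotent in $\kk[\Delta]$ exactly when $F\notin\Delta$, only summands with $F\in\Delta$ survive. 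The first task is to extract the $\ZZ^n$-graded $U$-component $(\check C^\spot)_U$ for each $U\in\ZZ^n$ and show it is nonzero only for $-U\in\NN^n$ with $\tau:=s(U)\in\Delta$: a monomial basis of the $U$-component of $\kk[\Delta]_{x_F}$ is $\{x^U\}$ if $\tau\subseteq F\in\Delta$ and $U_\ell\le 0$ for $\ell\in F$, and is empty otherwise. Writing $U=U^+-U^-$ and requiring the summand to be nonzero for the whole \v{C}ech complex (i.e.\ for \emph{some} $F\in\Delta$ containing $\tau$) forces $U^+=0$ and $\tau\in\Delta$.

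Next I would identify $(\check C^\spot)_U$ with a shifted relative simplicial cochain complex. For fixed $U$ with $-U\in\NN^n$ and $\tau=s(U)\in\Delta$, the $p$-th piece is
\[
  (\check C^p)_U \;\cong\; \bigoplus_{\substack{F\in\Delta,\ |F|=p\\ \tau\subseteq F}}\kk\,,
\]
which is exactly the degree $p-1$ term of the relative simplicial cochain complex $C^\spot(\Delta,\cost\tau;\kk)$, because the faces of $\Delta$ outside $\cost\tau$ are precisely those containing $\tau$. A direct check (with a choice of orientations) shows that the \v{C}ech differential agrees with the simplicial coboundary up to the usual sign conventions, so
\[
  H^i_\mm(\kk[\Delta])_U \;\cong\; H^{i-1}(\Delta,\cost\tau;\kk) \;=\; \M^i_U.
\]
This yields the underlying $\ZZ^n$-graded vector space isomorphism in (\ref{Hochster}).

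For the $\kk[\Delta]$-module structure, I would trace the multiplication-by-$x_\ell$ map $(\check C^\spot)_U\to (\check C^\spot)_{U+e_\ell}$ on monomials. If $\ell\notin\tau$, then $(U+e_\ell)_\ell=1>0$ so the target vanishes, giving the zero map. If $\ell\in\tau$ and $U_\ell<-1$, then $s(U+e_\ell)=\tau$, and on each summand $\kk[\Delta]_{x_F}$ with $\tau\subseteq F$ the map $x^U\mapsto x^{U+e_\ell}$ is the identity on the underlying basis, hence induces the identity on $H^{i-1}(\Delta,\cost\tau)$. The interesting case is $U_\ell=-1$: then $s(U+e_\ell)=\tau\setminus\{\ell\}=:\tau'$, and while the target component $(\check C^p)_{U+e_\ell}$ sums over $\{F\in\Delta:\tau'\subseteq F,|F|=p\}$, the image is only supported on those $F$ with $\ell\in F$, i.e.\ $\tau\subseteq F$. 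This is exactly the map of relative cochain complexes induced by the inclusion $\cost\tau\hookrightarrow\cost\tau'$ (note $\tau'\subseteq\tau$ gives $\cost\tau'\subseteq\cost\tau$), whose effect on cohomology is $\inc^*\colon H^{i-1}\local{\Delta}{\tau}\to H^{i-1}\local{\Delta}{\tau'}$.

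The main obstacle I anticipate is purely bookkeeping: fixing an orientation convention so that the sign of the \v{C}ech differential matches the sign of the simplicial coboundary on $C^\spot(\Delta,\cost\tau)$ consistently as $U$ varies, and verifying in the $U_\ell=-1$ case that the induced map on cohomology is genuinely $\inc^*$ and not some cochain-level variant. Once orientations are fixed once and for all (for instance by ordering $[n]$), all three cases follow from direct inspection of the monomial action, and assembling the $U$-components yields the claimed $\ZZ^n$-graded $\kk[\Delta]$-module isomorphism.
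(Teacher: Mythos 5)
The paper does not prove Theorem~\ref{Grabe}; it cites Gr\"abe~\cite{Grabe} for the $\kk[\Delta]$-module structure and attributes the underlying vector-space isomorphism to Hochster, so there is no in-paper proof to compare against. Your \v{C}ech-complex approach is the standard route to Hochster's formula and its Gr\"abe refinement, and your three-case analysis of the $x_\ell$-action is correct in substance (with a small typo: the relevant inclusion in the third case is $\cost\tau'\hookrightarrow\cost\tau$, not the reverse, as your own parenthetical makes clear).

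There is, however, a genuine gap in the preliminary vanishing step. You assert that $(\kk[\Delta]_{x_F})_U$ has monomial basis $\{x^U\}$ if and only if $s(U)\subseteq F\in\Delta$ and $U_\ell\le 0$ for $\ell\in F$, and from this you deduce that the $U$-graded piece $(\check C^\spot)_U$ of the whole \v{C}ech complex vanishes unless $-U\in\NN^n$. The assertion is false: $x^U$ lies in $\kk[\Delta]_{x_F}$ precisely when $U_\ell\ge 0$ for all $\ell\notin F$ and $F\cup s(U^+)\in\Delta$, where $U^+$ is the positive part of~$U$. For example, with $\Delta$ the single edge $\{1,2\}$, $F=\{1\}$, and $U=(-1,1)$, the monomial $x_1^{-1}x_2$ lies in $\kk[\Delta]_{x_1}$ even though $s(U)=\{1,2\}\not\subseteq F$. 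Thus $(\check C^\spot)_U$ is a nonzero complex for many $U$ with $U^+\neq 0$, and the vanishing of its cohomology for such $U$ requires an actual argument---the standard one being that the set of faces $\{F\in\Delta : s(U^-)\subseteq F,\ F\cup s(U^+)\in\Delta\}$ contributing to $(\check C^\spot)_U$ is closed under adjoining and deleting any fixed $\ell_0\in s(U^+)$, which yields a contracting homotopy. Without this step, the restriction to $-U\in\NN^n$, and hence the identification of $(\check C^\spot)_U$ with the relative cochain complex of $(\Delta,\cost s(U))$, is unjustified exactly where it needs to be proved.
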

We note that the isomorphism of (\ref{Hochster}) on the level of
vector spaces (rather than $\kk[\Delta]$-modules) is attributed to
Hochster in \cite[Section II.4]{St96}, and that
$$%
  H^i_\m(\kk[\Delta])_0\cong \M^i_{(0,\ldots,0)} =
  H^{i-1}\local{\Delta}{\nothing} = \tilde{H}^{i-1}(\Delta;\kk)
$$
is the reduced cohomology of~$\Delta$ itself, by definition.

\begin{corollary}\label{comb-char}
Let $\Delta$ be a simplicial complex of dimension $d - 1$.  The
singularity dimension of~$\Delta$ is less than~$m$ if and only if the
standard $\ZZ$-graded Hilbert series $F(H^i_\mm(\kk[\Delta]),
1/\lambda)$ of $H^i_\mm(\kk[\Delta])$ has a pole at $\lambda = 1$ of
order at most~$m$ for $i < d$.
\end{corollary}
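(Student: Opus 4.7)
The plan is to derive the corollary directly from Gr\"abe's theorem (Theorem~\ref{Grabe}) by writing the Hilbert series of $H^i_\mm(\kk[\Delta])$ in closed form and reading off the order of its pole at $\lambda = 1$. Using the isomorphism in~(\ref{Hochster}) and regrouping by the support $s(U)=\tau$, the exponent vectors contributing to a given face~$\tau$ are precisely those with strictly negative entries on~$\tau$ and zeros elsewhere, so for $\tau\ne\nothing$
$$
  \sum_{\substack{s(U)=\tau \\ -U\in\NN^n}}\lambda^{|U|}
  = \left(\frac{\lambda^{-1}}{1-\lambda^{-1}}\right)^{|\tau|}
  = \frac{1}{(\lambda-1)^{|\tau|}},
$$
and equals~$1$ for $\tau=\nothing$. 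Substituting $\lambda\mapsto 1/\lambda$ then yields
$$
  F\bigl(H^i_\mm(\kk[\Delta]),\,1/\lambda\bigr)
  = \sum_{\tau\in\Delta}\dim_\kk H^{i-1}\local{\Delta}{\tau}\cdot
  \frac{\lambda^{|\tau|}}{(1-\lambda)^{|\tau|}}.
$$

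Next I would read off the order of the pole at $\lambda = 1$. Grouping the summands by $|\tau|=j$ rewrites the series as $\sum_j a_j\,\lambda^j/(1-\lambda)^j$ with $a_j = \sum_{|\tau|=j}\dim_\kk H^{i-1}\local{\Delta}{\tau}$ a nonnegative integer. Since $\lambda^j$ does not vanish at $\lambda=1$ and rational functions with poles of distinct orders cannot cancel, the pole order equals the largest~$j$ with $a_j\ne 0$, equivalently the largest $|\tau|$ such that $H^{i-1}\local{\Delta}{\tau}\ne 0$. Consequently the condition ``the pole order is at most~$m$ for every $i<d$'' translates into the vanishing $H^{i-1}\local{\Delta}{\tau}=0$ for every face $\tau$ with $|\tau|\ge m+1$ and every $i<d$.

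Finally I would translate the vanishing condition into link cohomology via the standard identification $H^{i-1}\local{\Delta}{\tau}\cong\tilde H^{i-1-|\tau|}(\lk\tau;\kk)$, which specializes correctly to $\tilde H^{i-1}(\Delta;\kk)$ at $\tau=\nothing$ since $\lk\nothing=\Delta$. By definition a face~$\tau$ is singular iff $\tilde H^j(\lk\tau;\kk)\ne 0$ for some $j<d-1-|\tau|$, equivalently iff $H^{i-1}\local{\Delta}{\tau}\ne 0$ for some $i<d$. Thus the condition above says exactly that every face of dimension at least~$m$ is nonsingular, i.e., that the singularity dimension of $\Delta$ is less than~$m$. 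The only delicate step in this chain is the non-cancellation of face contributions to the pole at~$\lambda=1$, which is handled cleanly by grouping the summands by $|\tau|$ and invoking nonnegativity of the dimensions~$a_j$.
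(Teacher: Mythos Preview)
Your proof is correct and follows essentially the same approach as the paper: both derive the closed-form Hilbert series $F(H^i_\mm(\kk[\Delta]),1/\lambda)=\sum_{\tau\in\Delta}\dim_\kk H^{i-1}\local{\Delta}{\tau}\cdot\lambda^{|\tau|}/(1-\lambda)^{|\tau|}$ from Gr\"abe's theorem and then read off the pole order via the definition of singularity dimension and the identification $H^{i-1}\local{\Delta}{\tau}\cong\tilde H^{i-1-|\tau|}(\lk\tau;\kk)$. You simply spell out in more detail the non-cancellation step (grouping by $|\tau|$ and using nonnegativity of the $a_j$), which the paper leaves implicit.
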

\begin{proof}
An easy computation using eq.~\eqref{Hochster} implies that for $i<d$,
$$%
  F(H^i_\mm(\kk[\Delta]), 1/\lambda)
= \sum_{F\in\Delta} \dim_{\kk} H^{i-1}\local{\Delta}{F} \cdot
  \frac{\lambda^{|F|}}{(1-\lambda)^{|F|}}
= \sum_{F\in\Delta} \dim_{\kk} \tilde{H}^{i-|F|-1}(\lk F; \kk) \cdot
  \frac{\lambda^{|F|}}{(1-\lambda)^{|F|}}
$$
(cf.~\cite[Theorem II.4.1]{St96}).  The result now follows from the
definition of singularity dimension. 
\end{proof}

\section{Simplicial isolated singularities} \label{isolated sings}

Before proceeding to the proof of Theorem~\ref{main}, we consider a
special case.

\begin{defn}
$\Delta$ has {\it isolated singularities} if the singularity dimension
of $\Delta$ is zero.
\end{defn}

For the rest of this section we assume that $\Delta$ has isolated
singularities.  To begin with, we compute $H^i_\m(\kk[\Delta])$ for $i
< d$.  Since $\Delta$ has isolated singularities, Theorem~\ref{Grabe}
says that
$$%
  H^i_\m(\kk[\Delta])_U = 
  \begin{cases}
    \tilde{H}^{i-1}(\Delta; \kk) & \text{if } U=0 \\
    H^{i-1}\local{\Delta}{\{t\}} & \text{if } s(U) = \{t\}, \, U \in -\N^n\\
    0                            & \text{otherwise.}
  \end{cases}  
$$

Let $\theta =\sum^n_{t=1} a_t x_t$ be a linear form in $S$ with $a_t
\neq 0$ for all $t$.  We use the exact sequence
$$%
  \quad 0 \to \kk[\Delta] \stackrel{\cdot \theta}{\to}
  \kk[\Delta] \to \kk[\Delta]/(\theta) \to 0
$$
to compute $H^i_\m(\kk[\Delta]/(\theta))$ for $i < d-1$.  (The Krull
dimension of $\kk[\Delta]/(\theta)$ is $d - 1$.)  Since the map $\cdot
\theta$, that is multiplication by~$\theta$, is injective for any face
ring, the above sequence is in fact short exact.  The corresponding
long exact sequence in local cohomology is
\begin{equation*}
  \dots \to H^i_\m(\kk[\Delta])(-1) \stackrel{(\cdot \theta)^\ast}{\to} 
  H^i_\m(\kk[\Delta]) \to 
  H^i_\m(\kk[\Delta]/(\theta)) \stackrel{\delta}{\to} 
  H^{i+1}_\m(\kk[\Delta])(-1) \stackrel{(\cdot \theta)^\ast}{\to} 
  \dots \, .
\end{equation*}
Here, $\delta$ is the connecting homomorphism,
the notation $(-1)$ indicates a $\ZZ$-graded shift up by~$1$ (thus
$M(-1)_j = M_{j-1}$ for any $\ZZ$-graded $S$-module~$M$), and $(\cdot
\theta)^\ast$ is the map induced by multiplication, hence it is just the
module action of multiplication by~$\theta$ on $H^i_\m(\kk[\Delta])$.

Let $f^i: \bigoplus^n_{t=1} H^{i}\local{\Delta}{\{t\}} \to
H^i\local{\Delta}{\nothing}$ be the map on simplicial local cohomology
defined~by
$$%
  f^{i}= \sum_{t=1}^n a_t \cdot \inc^\ast\left[H^{i}\local{\Delta}{\{t\}}
  \to  H^i\local{\Delta}{\nothing}\right].
$$
By Gr\"abe's description of the $S$-module structure of the local
cohomology modules, using the standard $\ZZ$-grading (in which $\deg
x_t = 1 \in \ZZ$ for all~$t$) and the above long exact sequence,
\begin{equation} \label{isolated}
H^i_\m(\kk[\Delta]/(\theta))_j \cong
  \begin{cases}
  0                                   & \text{if } j<0 \\
  (\coker f^{i-1}) \oplus (\ker f^i)  & \text{if } j=0 \\
  H^i\local{\Delta}{\nothing}         & \text{if } j=1
  \end{cases}  \qquad (\forall i<d-1).
\end{equation}
In particular, $H^i_\m(\kk[\Delta]/(\theta))$ has finite dimension as
a vector space over~$\kk$ for $i<d-1$ .
 
Under certain conditions it is possible to extract even more precise
information.  We say that $\Delta$ has {\it homologically isolated
singularities} if in dimensions $i$ in the range $0 \le i \le d-2$,
the kernel of the above map $f^i$ decomposes as a direct sum:
$$%
  \ker f^i = \bigoplus^n_{t=1} \ker\inc^\ast\left[H^{i}\local{\Delta}{\{t\}}
  \to  H^i\local{\Delta}{\nothing}\right].
$$
Equivalently, $\Delta$ has homologically isolated singularities if for
$0 \le i \le d-2$, the images
$\inc^\ast\big(H^{i}\local{\Delta}{\{t\}}\big)$ for $t = 1,\ldots,n$
are linearly independent subspaces of $H^i\local{\Delta}{\nothing}$.
Evidently, any complex with only one isolated singularity has
homologically isolated singularities.  Other possibilities include
(among many) pinching off homologically independent handles of a
handlebody or coning off boundary components of a
manifold-with-boundary having appropriate homological properties.

Suppose $\Delta$ has homologically isolated singularities.  In
addition to the vector space structure of (\ref{isolated}), the
$\ZZ$-grading implies that the $S$-module structure of
$H^i_\m(\kk[\Delta]/(\theta))_j$ is trivial for $j=1$ and for $j=0$
restricted to $\coker f^{i-1}$.  Furthermore, on $\ker f^i \subseteq
H^i_\m(\kk[\Delta]/(\theta))_0$ the $S$-module structure is still
induced by $\inc$, and hence also trivial.  Therefore, by
\cite[Proposition~I.3.10]{StVo}, $\kk[\Delta]/(\theta)$ is Buchsbaum.
Thus, the Hilbert function of $\kk[\Delta]/(\theta_1, \dots,
\theta_d)$ for any linear system of parameters (l.s.o.p.)\
$\{\theta_1, \dots, \theta_d\}$ is completely determined by the
topology of $\Delta$ and its $f$-vector.  (Indeed, since $\kk$ is
infinite, any l.s.o.p.~for $\Delta$ will contain in its $\kk$-linear
span an element which is nonzero on every vertex and this one can be
used as $\theta_1$.)  This is a well known property of
Cohen--Macaulay~\cite{Reisner} and, more generally, Buchsbaum
complexes~\cite{Sch}.  For an arbitrary complex $\Delta$, distinct
linear systems of parameters can produce different Hilbert functions
for the Artinian quotient $\kk[\Delta]/(\theta_1, \dots, \theta_d)$.
However, for sufficiently generic linear systems of parameters, the
Hilbert function of $\kk[\Delta]/(\theta_1, \dots, \theta_d)$ is
constant.  A natural question is the following.

\begin{question}
Let $\Delta_1$ and $\Delta_2$ be two simplicial complexes whose
geometric realizations are homeomorphic and whose $f$-vectors are
identical.  Are the Hilbert functions of their Artinian quotients
obtained from sufficiently generic linear systems of parameters the
same?
\end{question}
 
\section{Main enumerative theorem for face rings}\label{proof}

We now proceed to the proof of the main theorem.  As before, let
$\Delta$ be a $(d-1)$-dimensional simplicial complex on $[n]$.  Let
$\theta_1,\ldots, \theta_d$ be $d$ generic linear forms in $S$ with
$\theta_p=\sum_{t=1}^n a_{t,p}x_t$.  In particular, we assume that
every square submatrix of the $n\times d$-matrix $A=(a_{t,p})$ is
nonsingular and the $\theta$'s satisfy the prime avoidance argument
in the proof of Theorem~\ref{isomorphism}~below.
 
Each $\theta_p$ acts on $\kk[\Delta]$ by multiplication,
$\cdot\theta_p: \kk[\Delta] \to \kk[\Delta]$.  This map, in turn,
induces the map $(\cdot \theta_p)^{\ast}=\cdot \theta_p :
H^\ell_{\m}(\kk[\Delta]) \to H^\ell_{\m}(\kk[\Delta])$ that increases
the $\ZZ$-grading by~$1$.  The key objects in the proof are the
kernels of these maps and their intersections:
$$%
\ker^\ell_{m, i} := \bigcap_{p=1}^m \left(\ker (\cdot \theta_p)^{\ast} \, : 
\, H^\ell_\m(\kk[\Delta])_{-(i+1)}
\to  H^\ell_\m(\kk[\Delta])_{-i}\right) \quad \mbox{and} \quad 
\ker^\ell_{m}:=\bigoplus_{i\in\ZZ} \ker^\ell_{m, i}.
$$
Thus $\ker^\ell_m$ is a graded submodule of $H^\ell_\m(\kk[\Delta])$
and $\ker^\ell_{m,i}$ is simply $(\ker^\ell_m)_{-(i+1)}$.

What are the dimensions of these kernels?  If $m=0$, then
$\ker^\ell_{0, i} = H^\ell_\m(\kk[\Delta])_{-(i+1)}$, and
Theorem~\ref{Grabe} implies that for $i\geq 0$,
\begin{eqnarray}  \nonumber
\dim_{\kk} \ker^\ell_{0, i}
& = & \sum_{F\in\Delta} |\{U\in\N^n \, : \, s(U)=F, \, |U|=i+1\}|
      \cdot \dim_{\kk} H^{\ell-1}\local{\Delta}{F} \\\label{Hochs}
& = & \sum_{F\in\Delta} \binom{i}{|F|-1} \cdot \dim_{\kk}
      H^{\ell-1}\local{\Delta}{F}.
\end{eqnarray}
For a general $m$, we prove the following (where we set
$\binom{a}{b}=0$ if $b<0$).

\begin{lemma} \label{ineq}
For every $0\leq m \leq d$, $\ell\leq d$, and $i\geq m$, 
$$%
  \dim_{\kk} \ker^\ell_{m,i} \leq \sum_{F\in\Delta}
  \binom{i-m}{|F|-m-1}\cdot \dim_{\kk}  H^{\ell-1}\local{\Delta}{F}.
$$
\end{lemma}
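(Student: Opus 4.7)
The plan is to induct on $m$, actually proving the stronger statement that the displayed inequality is an equality for $i\geq m$. The base case $m=0$ is precisely equation~\eqref{Hochs}, which holds as an equality by Theorem~\ref{Grabe} and the standard stars-and-bars count of vectors $U\in -\NN^n$ with $s(U)=F$ and $|U|=-(i+1)$. Keeping the stronger (equality) form is convenient because the inductive step involves subtracting dimension estimates in adjacent degrees, where only matched equalities combine cleanly.

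For the inductive step, fix $m\geq 1$ and $i\geq m$. Since the operators $\cdot\theta_1,\dots,\cdot\theta_m$ on $H^\ell_\m(\kk[\Delta])$ pairwise commute (being left-multiplications by ring elements), $\cdot\theta_m$ restricts to a linear map $\ker^\ell_{m-1,i}\to\ker^\ell_{m-1,i-1}$ whose kernel is exactly $\ker^\ell_{m,i}$. Writing $C$ for its cokernel, rank-nullity gives
\begin{equation*}
\dim\ker^\ell_{m,i}=\dim\ker^\ell_{m-1,i}-\dim\ker^\ell_{m-1,i-1}+\dim C.
\end{equation*}
Feeding the inductive equality into the right-hand side and using the Pascal relation $\binom{i-m+1}{|F|-m}-\binom{i-m}{|F|-m}=\binom{i-m}{|F|-m-1}$ termwise over $F\in\Delta$, the claimed bound (indeed as equality) follows at once, \emph{provided} $C=0$ for the chosen generic $\theta_m$ whenever $i\geq m$.

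The expected main obstacle is this surjectivity of $\cdot\theta_m$ on $\ker^\ell_{m-1,i}$. I would attack it through Gr\"abe's explicit description (Theorem~\ref{Grabe}): a target $v$ in the Gr\"abe component indexed by $V$ with $s(V)=F$ admits ``identity-type'' preimages $(F,V-e_t,\lambda_t v)$ for each $t\in F$, giving an $|F|$-parameter family of candidates summing to $v$ at position~$V$. Imposing that some combination lie in $\ker^\ell_{m-1,i}$ becomes, after collecting the cross-contributions from $\cdot\theta_p$ on each candidate (including inclusion-induced $\inc^*$ terms landing in faces $F\setminus\{s\}$), a linear system in the parameters~$\lambda_t$ whose coefficient matrix is a submatrix of $A=(a_{t,p})$; the genericity hypothesis that every square submatrix of $A$ is nonsingular, together with the prime avoidance alluded to just before the lemma, should force solvability precisely when $|F|\geq m+1$. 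Faces with $|F|\leq m$ contribute $\binom{i-m}{|F|-m-1}=0$ to the bound and need no preimage construction, and this is also what pins the range of validity to $i\geq m$. Carefully tracking the $\inc^*$-induced cross terms --- which may require iterating the preimage construction across pairs of adjacent faces in $\Delta$ --- is where I expect the technical work to concentrate.
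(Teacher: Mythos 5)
Your strategy — induct on $m$ via rank–nullity for $\cdot\theta_m \colon \ker^\ell_{m-1,i}\to\ker^\ell_{m-1,i-1}$, then establish surjectivity — has a structural problem: the surjectivity you need is precisely the second assertion of Lemma~\ref{equality} (shifted by one in $m$), and in the paper that surjectivity is \emph{derived from} Lemma~\ref{ineq}. The paper first proves Lemma~\ref{ineq} as an upper bound by a direct combinatorial argument, then in Lemma~\ref{equality} uses rank–nullity to get a lower bound, and only by squeezing the two does it conclude both the equality and the surjectivity. So your plan effectively asks you to prove Lemma~\ref{equality} from scratch in order to get Lemma~\ref{ineq}, which inverts the paper's logical order and removes exactly the tool (the already-proved upper bound) that makes the surjectivity conclusion cheap. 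You would have to supply an independent proof of this surjectivity, and that is not a small technical remnant — it is the whole content.

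Your sketch of that independent proof is where the gap lies. Constructing a preimage under $\cdot\theta_m$ of a given $w\in\ker^\ell_{m-1,i-1}$ must produce an element $z$ satisfying two things at once: $\theta_m z = w$ \emph{and} $\theta_p z = 0$ for all $p<m$. The ``identity-type'' preimages $\lambda_t v$ in Gr\"abe components $V+e_t$ with $t\in s(V)$ only address the first requirement at the single position $V$; multiplication by $\theta_m$ also shoves contributions (via $\inc^*$) into positions of smaller support, and the constraints (b) couple all positions together, so the system is not face-local and does not cleanly reduce to a square minor of $A$ one face at a time. You acknowledge this (``iterating the preimage construction across pairs of adjacent faces''), but that is exactly the part that has no argument. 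By contrast, the paper's proof of Lemma~\ref{ineq} avoids the surjectivity question entirely: it proves the inequality as an \emph{injectivity} statement — an element $z\in\ker^\ell_{m,i}$ vanishing on the distinguished coordinates $\W_{i+1,F}$ must vanish everywhere — via a descending induction over the partial order $\succ$ on $\V_i$, where each step is an honest $m\times m$ system with coefficient matrix a square submatrix of $A$, nonsingular by genericity. That induction has a built-in mechanism (choosing $T'$ and $T''$ with strictly larger rank in $\succ$) for discharging the $\inc^*$ cross-terms, which is what your sketch is missing. Unless you can make the surjectivity argument precise without invoking Lemma~\ref{ineq}, the proposal is incomplete.
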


We defer the proof to the end of the section.  Using Lemma~\ref{ineq},
we can say even more.

\begin{lemma} \label{equality}
For every $0\leq m \leq d$, $\ell\leq d$, and $i\geq m$, 
$$%
  \dim_\kk \ker^\ell_{m,i} = \sum_{F\in\Delta} \binom{i-m}{|F|-m-1}
  \cdot \dim_{\kk} {H}^{\ell-1}\local{\Delta}{F}.
$$
Moreover, the map $\oplus_{i\geq m+1} \ker^\ell_{m,i} \stackrel{\cdot
\theta_{m+1}} {\longrightarrow} \oplus_{i\geq m+1}\ker^\ell_{m,i-1}$,
is a surjection.
\end{lemma}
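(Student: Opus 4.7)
The plan is to prove the dimension equality by induction on $m$ and then deduce the surjectivity as a consequence. The base case $m = 0$ of the equality is precisely equation~\eqref{Hochs}, since by definition $\ker^\ell_{0,i} = H^\ell_\m(\kk[\Delta])_{-(i+1)}$.

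For the inductive step, I would first observe that because multiplication by $\theta_m$ commutes with multiplication by $\theta_1,\ldots,\theta_{m-1}$, it restricts to a $\kk$-linear map $\ker^\ell_{m-1,i} \to \ker^\ell_{m-1,i-1}$ whose kernel is exactly $\ker^\ell_{m,i}$. This yields the exact sequence
\begin{equation*}
  0 \longrightarrow \ker^\ell_{m,i} \longrightarrow \ker^\ell_{m-1,i}
  \stackrel{\cdot\theta_m}{\longrightarrow} \ker^\ell_{m-1,i-1},
\end{equation*}
and hence the lower bound $\dim_\kk \ker^\ell_{m,i} \geq \dim_\kk \ker^\ell_{m-1,i} - \dim_\kk \ker^\ell_{m-1,i-1}$. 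Plugging in the inductive hypothesis at level $m-1$ — valid for both indices since $i \geq m$ forces $i-1 \geq m-1$ — and applying the Pascal identity $\binom{i-m+1}{|F|-m} - \binom{i-m}{|F|-m} = \binom{i-m}{|F|-m-1}$ term by term, I obtain the claimed formula at level $m$ as a lower bound. Combined with the upper bound from Lemma~\ref{ineq}, this forces equality.

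Once equality is established at every level, the surjectivity of $\cdot\theta_{m+1}$ follows immediately from the analogous exact sequence at level $m+1$: the image has dimension $\dim_\kk \ker^\ell_{m,i} - \dim_\kk \ker^\ell_{m+1,i}$, and a single further application of Pascal's rule to the dimension formulas at levels $m$ and $m+1$ shows this equals $\dim_\kk \ker^\ell_{m,i-1}$. Since the image sits inside $\ker^\ell_{m,i-1}$, matching dimensions force equality of these subspaces, which is precisely the asserted surjectivity.

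The substantive work of the lemma lies entirely in Lemma~\ref{ineq}; once the upper bound is available, the matching lower bound and the surjectivity follow mechanically from a dimension count and Pascal's identity. The one point requiring a bit of care is checking that the exact-sequence argument invokes the inductive hypothesis only at indices within its range (namely $i$ and $i-1$, both $\geq m-1$), which is guaranteed by the standing assumption $i \geq m$.
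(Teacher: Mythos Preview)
Your proof is correct and follows essentially the same approach as the paper's: induction on~$m$ with base case eq.~\eqref{Hochs}, the rank--nullity inequality from the map $\ker^\ell_{m-1,i}\stackrel{\cdot\theta_m}{\to}\ker^\ell_{m-1,i-1}$, Pascal's identity to identify the lower bound, and Lemma~\ref{ineq} to pin it to equality. The only cosmetic difference is organizational---the paper derives surjectivity inside the inductive step (observing that the inequality being an equality forces the rightmost map to be onto), whereas you extract it afterward via a second dimension count; both are the same computation.
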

\begin{proof}%
We prove the statement on the dimension of $\ker^\ell_{m,i}$ by
induction on $m$.  For $m=0$ (and any $\ell, i \geq 0$), this is
eq.~(\ref{Hochs}).  For larger $m$, we notice that the restriction of
$(\cdot \theta_{m+1})^{\ast}$ to $\ker^\ell_{m,i}$ is a linear map
from $\ker^\ell_{m,i}$ to $\ker^\ell_{m,i-1}$, whose kernel is
$\ker^\ell_{m+1,i}$.  Thus for $i\geq m+1$,
\begin{eqnarray*}
\dim_\kk \ker^\ell_{m+1,i}& \geq & 
\dim_\kk \ker^\ell_{m,i}-\dim \ker^\ell_{m,i-1}\\
 &=&
\sum_{F\in\Delta}
\left[\binom{i-m}{|F|-m-1}-\binom{i-1-m}{|F|-m-1}\right] \cdot
 \dim_\kk H^{\ell-1}\local{\Delta}{F} \\
 &=&\sum_{F\in\Delta}
\binom{i-(m+1)}{|F|-(m+1)-1}\cdot \dim_\kk {H}^{\ell-1}\local{\Delta}{F}.
\end{eqnarray*}
The second step in the above computation is by the inductive
hypothesis.  Comparing the resulting inequality to that of
Lemma~\ref{ineq} shows that this inequality is in fact equality, and
hence that the map $(\cdot\theta_{m+1})^\ast \, : \, \ker^\ell_{m,i}
\to \ker^\ell_{m,i-1}$ is surjective for $i\geq m+1$.
\end{proof}

Lemma~\ref{equality} allows us to get a handle on
$H^\ell_{\m}(\kk[\Delta]/(\theta_1, \ldots, \theta_m))_{-i}$ at least
for $\ell,i>0$.

\begin{theorem} \label{isomorphism}%
For $0\leq m \leq d$ and $0<\ell\leq d-m$, there is a graded
isomorphism of modules
$$%
  \bigoplus_{i\geq 1} H^\ell_\m\big(\kk[\Delta]/(\theta_1,\ldots,
  \theta_m)\big){}_{-i} \cong \bigoplus_{i\geq1} \ker^{\ell+m}_{m,i+m-1}
$$
compatible with the direct sum, hence
$H^\ell_\m(\kk[\Delta]/(\theta_1,\ldots, \theta_m))_{-i} \cong
\ker^{\ell+m}_{m,i+m-1}$.  Thus, 
$$%
  \dim_\kk H^\ell_\m\big(\kk[\Delta]/(\theta_1,\ldots,
  \theta_m)\big){}_{-i}= \sum_{F\in\Delta} \binom{i-1}{|F|-m-1}\cdot
  \dim_\kk {H}^{\ell+m-1}\local{\Delta}{F}  \quad \mbox{for $\ell,i>0$}.
$$ 
\end{theorem}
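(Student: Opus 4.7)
The plan is induction on $m$. The base case $m=0$ is tautological: by the very definition $\ker^\ell_{0,i-1} = H^\ell_\m(\kk[\Delta])_{-i}$ for all $\ell$ and $i\geq 1$, and the dimension formula then reduces to equation~\eqref{Hochs}.

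For the inductive step, write $R_j := \kk[\Delta]/(\theta_1,\ldots,\theta_j)$ and consider multiplication by $\theta_{m+1}$ on $R_m$. This yields the four-term exact sequence
\[
0 \to K(-1) \to R_m(-1) \xrightarrow{\,\cdot\theta_{m+1}\,} R_m \to R_{m+1} \to 0, \qquad K := 0 :_{R_m} \theta_{m+1}.
\]
The prime-avoidance genericity built into the $\theta_p$'s forces $\theta_{m+1}$ to lie in no associated prime of $R_m$ other than possibly $\m$, so $K$ is $\m$-torsion and $H^j_\m(K) = 0$ for all $j>0$. Splitting the four-term sequence into $0 \to K(-1) \to R_m(-1) \to \theta_{m+1}R_m \to 0$ and $0 \to \theta_{m+1}R_m \to R_m \to R_{m+1} \to 0$ and concatenating the associated long exact sequences in local cohomology, the vanishing of $H^{>0}_\m(K)$ collapses the result into a graded short exact sequence
\[
0 \to \coker\!\bigl(\cdot\theta_{m+1}\colon H^\ell_\m(R_m)(-1) \to H^\ell_\m(R_m)\bigr) \to H^\ell_\m(R_{m+1}) \to \ker\!\bigl(\cdot\theta_{m+1}\colon H^{\ell+1}_\m(R_m)(-1) \to H^{\ell+1}_\m(R_m)\bigr) \to 0
\]
for every $\ell \geq 1$.

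Restricting to degree $-i$ with $i \geq 1$ and $0 < \ell \leq d-(m+1)$, the inductive hypothesis at level $\ell$ identifies the cokernel term with the cokernel of the map $\ker^{\ell+m}_{m,i+m} \to \ker^{\ell+m}_{m,i+m-1}$ induced by $\cdot\theta_{m+1}$, which is surjective by Lemma~\ref{equality}; hence the cokernel vanishes. The inductive hypothesis at level $\ell+1$ (valid because $\ell+1 \leq d-m$) identifies the kernel term with $\ker\!\bigl(\cdot\theta_{m+1}\colon \ker^{\ell+m+1}_{m,i+m} \to \ker^{\ell+m+1}_{m,i+m-1}\bigr)$, and by the definition of $\ker^{\bullet}_{m+1,\bullet}$ this equals $\ker^{\ell+m+1}_{m+1,i+m} = \ker^{\ell+(m+1)}_{m+1,i+(m+1)-1}$. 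The graded isomorphisms assemble over $i \geq 1$, and the dimension formula is immediate from Lemma~\ref{equality}.

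I expect the main obstacle to be precisely that $\theta_{m+1}$ need not be a non-zero-divisor on $R_m$, which is what forces the four-term rather than a three-term sequence and makes the genericity of the $\theta_p$'s enter in an essential way: without $H^{>0}_\m(K)=0$ the comparison of $H^\ell_\m(R_{m+1})$ with kernel and cokernel of $\cdot\theta_{m+1}$ on $H^\ell_\m(R_m)$ would pick up extraneous contributions. Once this vanishing is in hand, the rest of the argument is bookkeeping with two long exact sequences and a direct appeal to Lemma~\ref{equality}.
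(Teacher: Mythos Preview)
Your proof is correct and follows essentially the same approach as the paper's: induction on~$m$, handling the possible failure of $\theta_{m+1}$ to be a non-zero-divisor via prime avoidance, then extracting the kernel--cokernel short exact sequence and applying the inductive hypothesis together with Lemma~\ref{equality}. The only cosmetic difference is that the paper mods out $H^0_\m(R_m)$ to make $\theta_{m+1}$ a genuine non-zero-divisor, whereas you carry the kernel $K$ through a four-term sequence and use $H^{>0}_\m(K)=0$; these are equivalent standard maneuvers.
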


\begin{proof}%
The proof is by induction on $m$, with the $m=0$ case being evident.
For larger~$m$, we want to mimic the proof given in
Section~\ref{isolated sings}.  One obstacle to this approach is that
the map $\cdot\theta_{m+1}: \kk[\Delta]/(\theta_1,\ldots, \theta_m)\to
\kk[\Delta]/(\theta_1,\ldots, \theta_m)=:\M[m]$ might not be injective
anymore.  However, a ``prime avoidance'' argument together with the
genericity assumption on $\theta_{m+1}$ implies that
$\cdot\theta_{m+1}:\M[m]/H^0_\m(\M[m]) \to \M[m]/H^0_\m(\M[m])$ is
injective, and hence one has the corresponding long exact sequence in
local cohomology; see, for instance, \cite[Chapter~3]{Eis} for details
on ``prime avoidance'' arguments.  Since $H^0_\m(\M[m])$ has Krull
dimension 0, modding $H^0_\m(\M[m])$ out does not affect $H^\ell_\m$
for $\ell>0$, so that the part of this long exact sequence
corresponding to $\ell,i>0$ can be rewritten as
\begin{eqnarray*}
\bigoplus_{i\geq 1} H^\ell_\m(\M[m])_{-(i+1)} 
 &\stackrel{(\cdot \theta_{m+1})^\ast}{\longrightarrow}&
\bigoplus_{i\geq 1} H^\ell_\m(\M[m])_{-i}  \longrightarrow
\bigoplus_{i\geq 1} H^\ell_\m(\M[m+1])_{-i} \\  
&\stackrel{\delta}{\longrightarrow}& 
\bigoplus_{i\geq 1} H^{\ell+1}_\m(\M[m])_{-(i+1)} 
\stackrel{(\cdot \theta_{m+1})^\ast}{\longrightarrow} 
\bigoplus_{i\geq 1} H^{\ell+1}_\m(\M[m])_{-i}.
\end{eqnarray*}
By the inductive hypothesis combined with Lemma~\ref{equality}, the
leftmost map in this sequence is surjective.  Hence the module
$\oplus_{i\geq 1} H^\ell_\m(\M[m+1])_{-i}$ is isomorphic to the kernel
of the rightmost map.  Applying the inductive hypothesis to the last
two entries of the sequence then implies the following isomorphism of
modules:
\begin{equation*}%
  \bigoplus_{i\geq 1} H^\ell_\m(\M[m+1])_{-i} \cong \bigoplus_{i\geq
  1} \ker \big((\cdot \theta_{m+1})^\ast \, : \, \ker^{\ell+m+1}_{m,
  i+m} \to \ker^{\ell+m+1}_{m, i+m-1}\big) = \bigoplus_{i\geq 1}
  \ker^{\ell+m+1}_{m+1, i+m}.\qedhere
\end{equation*}
\end{proof}

Theorem~\ref{main} now follows easily from Theorem~\ref{isomorphism}.

\begin{proofof}{Theorem~\ref{main}}
Since $\kk[\Delta]$ is a finitely-generated algebra,
$H^0_\m\big(\kk[\Delta]/(\theta_1,\ldots, \theta_m)\big)$ has Krull
dimension zero, and hence is a finite-dimensional vector space for any
simplicial complex $\Delta$.  So we only need to care about
$H^\ell_\m$ for $\ell>0$.  As $\binom{i-1}{|F|-m-1}>0$ for all $i\gg
0$ and $|F|>m$, Theorem~\ref{isomorphism} implies that
$\kk[\Delta]/(\theta_1,\ldots, \theta_m)$ is a ring with finite local
cohomology if and only if for all faces $F\in\Delta$ of size larger
than $m$ and all $\ell+m<d$, the cohomology
${H}^{\ell+m-1}\local{\Delta}{F}$ vanishes.  Given that
${H}^{\ell+m-1}\local{\Delta}{F}$ is isomorphic to $
\tilde{H}^{\ell+m-1-|F|}(\lk F; \kk)$ (see e.g.\
\cite[Lemma~1.3]{Grabe}), this happens if and only if each such $F$ is
nonsingular.
\end{proofof}

It remains to verify Lemma~\ref{ineq}.  For its proof we use the
following notation.  Fix $m, \ell >0$, and $i\geq m$.  For $r\in \{i,
i+1\}$, let
$$%
  \V_r:=\{U\in\N^n \, : \, |U|=r, \, s(U)\in\Delta\},
$$
and for $F\in\Delta$, let $\V_{r,F}:= \{u\in\V_r \, : \, s(U)=F\}$.
If $F = \{f_1 < \cdots < f_j\} \in \Delta$ where $j > m$, then set
$\W_{r, F} := \{U=(u_1,\ldots, u_n)\in \V_{r,F} \, : \, u_{f_s}=1
\mbox{ for } 1\leq s\leq m \}$.  Observe that $\W_{i+1, F}$ is a
subset of $\V_{i+1, F}$ of cardinality $\binom{i-m}{|F|-m-1}$.

For $G\subseteq F\in\Delta$ let $\Phi_{F,G}$ denote the map $\inc^\ast
: H^{\ell-1}\local{\Delta}{F} \to H^{\ell-1}\local{\Delta}{G}$.  Thus,
$\Phi_{F,G}$ is the identity map if $F=G$.  Using Theorem~\ref{Grabe},
we identify $H^\ell_\m(\kk[\Delta])_{-r}$ with $\bigoplus_{U\in\V_{r}}
H^{\ell-1}\local{\Delta}{s(U)}$, and for $z\in
H^\ell_\m(\kk[\Delta])_{-r}$ we write $z=(z_U)_{U\in\V_{r}}$, where
$z_U\in H^{\ell-1}\local{\Delta}{s(U)}$ is the $(-U)$-th component of
$z$.  Since $\theta_p=\sum_{t=1}^n a_{t,p}x_t$, Theorem~\ref{Grabe}
yields that for such $z$, $r=i+1$, and $T\in \V_i$,
\begin{equation} \label{mult}
(\theta_p z)_T = \sum_{\{t \, : \, T+e_t\in\V_{i+1}\}}
    a_{t,p} \cdot \Phi_{s(T+e_t), s(T)}(z_{T+e_t}).
\end{equation}

\begin{proofof*}{Lemma~\ref{ineq}}
Since $|\W_{i+1, F}|=\binom{i-m}{|F|-m-1}$ for all $F\in\Delta$, to
prove that $\dim_{\kk} \ker^\ell_{m,i} \leq \sum_{F\in\Delta}
\binom{i-m}{|F|-m-1}\cdot \dim_{\kk} H^{\ell-1}\local{\Delta}{F}$, it
is enough to verify that for $z,z'\in \ker^\ell_{m,i}$,
$$%
  z_U=z'_U \ \text{ for all }  F\in\Delta \text{ and all }
  U\in\W_{i+1, F} \quad \Longrightarrow \quad z=z',
$$ 
or equivalently (since $\ker^\ell_{m,i}$ is a $\kk$-space) that for
$z\in \ker^\ell_{m,i}$,
\begin{equation} \label{initial}
  z_U=0 \ \mbox{ for all }  F\in\Delta \mbox{ and all } U\in\W_{i+1,
  F} \quad \Longrightarrow \quad z=0.
\end{equation}
To prove this, fix such a $z$.  From eq.~(\ref{mult}) and the
definition of $\ker^\ell_{m,i}$, it follows that
\begin{equation} \label{zero}
\sum_{\{t \, : \, T+e_t\in\V_{i+1}\}}
  a_{t,p} \cdot \Phi_{s(T+e_t), s(T)}(z_{T+e_t}) =0 
  \quad \forall \ 1\leq p \leq m \mbox{ and } \forall \  T\in \V_i.
\end{equation}
For a given $T\in\V_i$, we refer to the $m$ conditions imposed on $z$
by eq.~(\ref{zero}) as ``the system defined by~$T$'', and denote this
system by~$\S_T$.

Define a partial order, $\succ$, on $\V_{i}$ as follows: $T'\succ T$
if either $|s(T')|>|s(T)|$, or $s(T')=s(T)$ and the last non-zero
entry of $T'-T$ is positive.  To finish the proof, we verify by a
descending (with respect to~$\succ$) induction on $T\in\V_i$, that
$z_{T+e_t}=0$ for all $t$ with $T+e_t\in\V_{i+1}$.  For $T\in\V_i$,
there are two possible cases (we assume that $s(T)=F=\{f_1< \ldots
<f_j\}$).

\begin{case}{Case 1.}
$T\in\W_{i,F}$.  Then for each $t\notin F$, either
$F':=F\cup\{t\}\notin\Delta$, in which case $T+e_t\notin\V_{i+1}$, or
$T+e_t\in\W_{i+1, F'}$, in which case $z_{T+e_t}=0$ by
eq.~(\ref{initial}).  Similarly, if $t=f_r$ for some $r>m$, then
$T+e_t\in \W_{i+1, F}$, and $z_{T+e_t}=0$ by (\ref{initial}).
Finally, for any $t\in F$, $s(T+e_t)=s(T)=F$, and so $\Phi_{s(T+e_t),
s(T)}$ is the identity map.  Thus the system $\S_T$ reduces to $m$
linear equations in $m$ variables:
\begin{equation} \label{system}
  \sum_{r=1}^m a_{f_r, p}\cdot z_{T+e_{f_r}} =0  
  \quad \forall  \  1\leq p \leq m.
\end{equation}
Since the matrix $(a_{f_r,p})_{1\leq r,p \leq m}$ is nonsingular,
$(z_{T+e_t}=0 \mbox{ for all }t)$ is the only solution of~$\S_T$.
\end{case}

\begin{case}{Case 2.}
$T\notin \W_{i,F}$, and so $T_{f_s}\geq 2$ for some $s\leq m$.  Then
for any $t\notin F$, $T':=T-e_{f_s}+e_t \succ T$, as $T'$ has a larger
support than $T$, and $T'+e_{f_s}=T+e_t$.  Hence
$z_{T+e_t}=z_{T'+e_{f_s}}=0$ by the inductive hypothesis on $T'$.
Similarly, if $t\in F-\{f_1, \ldots, f_m\}$, then $t>f_s$, and so
$T'':= T-e_{f_s}+e_t \succ T$.  As $T''+e_{f_s}=T+e_t$, the inductive
hypothesis on $T''$ imply that $z_{T+e_t}=0$.  Thus, as in Case 1,
$\S_T$ reduces to system (\ref{system}), whose only solution is
trivial.\qed
\end{case}
\end{proofof*}

\section{Complexes Cohen--Macaulay in a fixed codimension}\label{codim}

Our goal for this section, in Theorem~\ref{t:singdim}, is to rephrase
the notion of singularity dimension of a simplicial complex~$\Delta$
as a local geometric condition on the (spectrum of) the face
ring~$\kk[\Delta]$, analogous to---and generalizing---the
Cohen--Macaulay condition.

\begin{defn} A simplicial complex
$\Delta$ is \emph{CM of dimension~$i$ along $F$} if $\lk F$ is
Cohen--Macaulay of dimension~$i$.
\end{defn}

\begin{remark}
If $\Delta$ has dimension $d-1$, then for $\Delta$ to be CM of
dimension $d - 1 - |F|$ along $F$ is stronger than for $F$ to be a
nonsingular face of~$\Delta$.  For example, nonsingularity
of~$\nothing$ means only that $\Delta$ is a homology-wedge-of-spheres,
while CM of dimension $d-1$ along~$\nothing$ implies nonsingularity of
all faces of~$\Delta$.
\end{remark}

\begin{defn}
A simplicial complex $\Delta$ of dimension~$r$ is \emph{CM in
codimension~$c$} if $\Delta$ is CM of dimension $c - 1$ along every
face of dimension $r - c$, or if $c > r+1$ and $\Delta$ is
Cohen--Macaulay.
\end{defn}

\begin{remark}\label{r:pure}
If $\Delta$ is pure, then it would be enough to require that $\lk F$
is Cohen--Macaulay for every face~$F$ of dimension $r-c$; but if
$\Delta$ has dimension~$r$ and is not pure, then it is possible for
$\lk F$ to be Cohen--Macaulay without $\Delta$ being CM of dimension
$r - |F|$ along~$F$ (such a face~$F$ is singular in~$\Delta$ unless
$\lk F$ is $\kk$-acyclic).  This occurs, for example, when $F$ is a
facet of dimension $< r$, in which case $\lk F = \{\nothing\}$ is
Cohen--Macaulay of dimension~$-1$.
\end{remark}

\begin{proposition}
Fix a simplicial complex $\Delta$ of dimension~$r$.  The singularity
dimension of~$\Delta$ is the minimum~$i$ such that $\Delta$ is CM in
codimension $r-i-1$.
\end{proposition}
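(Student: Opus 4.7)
The plan is to prove the equivalent restatement that, for each integer~$i \geq -1$, the singularity dimension of~$\Delta$ is at most~$i$ if and only if $\Delta$ is Cohen--Macaulay in codimension $c := r-i-1$; taking the minimum over~$i$ then yields the proposition. The entire argument will rest on Reisner's criterion for Cohen--Macaulayness, combined with the elementary identity $\lk_{\lk F}G = \lk_\Delta(F\cup G)$ for $F\subseteq F\cup G\in\Delta$, which converts statements about links of larger faces of~$\Delta$ into statements about links inside~$\lk F$.

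For the forward implication, I would suppose that every face of~$\Delta$ of dimension $\geq r-c$ is nonsingular, and fix a face~$F$ of dimension exactly $r-c$. To check that $\lk F$ is Cohen--Macaulay of dimension $c-1$, apply Reisner's criterion at each face $G\in\lk F$: the face $F\cup G$ of~$\Delta$ has dimension $(r-c)+|G|\geq r-c$, hence is nonsingular, so $\tilde H^j(\lk_\Delta(F\cup G);\kk) = 0$ for all $j < r - |F\cup G| = c-1-|G|$. Since $\dim\lk_{\lk F}G \leq c-1-|G|$ and $\lk_{\lk F}G = \lk_\Delta(F\cup G)$, Reisner's vanishing condition for $\lk F$ at~$G$ is met, so $\lk F$ is Cohen--Macaulay.

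For the reverse implication, I would assume $\Delta$ is CM in codimension~$c$, and pick any face~$H$ of dimension $\geq r-c$, aiming to show $H$ is nonsingular. Choose a subface $F\subseteq H$ of dimension exactly $r-c$ (obtained by removing $|H|-(r-c+1)$ vertices from~$H$). Then $H\setminus F$ is a face of~$\lk F$ with $\lk_\Delta H = \lk_{\lk F}(H\setminus F)$. Because $\lk F$ is Cohen--Macaulay of dimension $c-1$ and hence pure, $H\setminus F$ lies in some facet of~$\lk F$ of dimension $c-1$, forcing $\dim\lk_{\lk F}(H\setminus F) = (c-1)-|H\setminus F| = r-|H|$. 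Reisner's criterion applied inside~$\lk F$ at the face $H\setminus F$ then gives $\tilde H^j(\lk_\Delta H;\kk) = 0$ for $j < r-|H|$, which is exactly the nonsingularity of~$H$.

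The main delicate point will be the dimensionality of~$\lk F$ in the forward direction when $F$ is not contained in any top-dimensional facet of~$\Delta$---for instance, when~$F$ is itself a facet of~$\Delta$ of dimension~$<r$, giving $\lk F = \{\nothing\}$. In such a case $\lk F$ is $\kk$-acyclic and only nominally Cohen--Macaulay of dimension strictly less than $c-1$; one handles this by adopting the convention (consistent with Remark~\ref{r:pure}) that such acyclic links qualify for the CM-in-codimension-$c$ condition along~$F$, so that the equivalence above remains clean at the boundary.
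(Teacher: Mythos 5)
Your forward and reverse arguments use the same ingredients as the paper's proof (Reisner's criterion plus the identity $\lk_{\lk F}G = \lk_\Delta(F\cup G)$), and the reverse direction is correct as written. The forward direction is also essentially correct, but the final paragraph misdiagnoses the boundary case and proposes a fix that is both factually wrong and would falsify the proposition. First, $\lk F = \{\nothing\}$ is \emph{not} $\kk$-acyclic: $\tilde H^{-1}(\{\nothing\};\kk)=\kk\neq 0$, and this is precisely what makes such an $F$ singular. Second, Remark~\ref{r:pure} says the opposite of what you attribute to it: it warns that a Cohen--Macaulay link of too small dimension does \emph{not} qualify for CM of dimension $r-|F|$ along~$F$; Definition~\ref{d:codim} and its simplicial version require $\dim\lk F$ to equal $c-1$ exactly. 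Third, if you actually adopted your ``convention'' the statement would fail: take $\Delta$ to be the disjoint union of a $2$-simplex and an edge ($r=2$). The edge is a singular face, so the singularity dimension is $1$; yet the link of every $1$-face is Cohen--Macaulay of \emph{some} dimension, so under your relaxed reading $\Delta$ would count as CM in codimension~$1$, contradicting the proposition.

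The delicate case simply cannot arise under your hypothesis, which is what the paper's parenthetical aside records. If $\dim\lk F < c-1$, pick a facet $G$ of $\lk F$ with $|G| < c$. Then $F\cup G$ is a facet of $\Delta$ of dimension $\geq \dim F = r-c$ but $< r$, so $\lk_\Delta(F\cup G)=\{\nothing\}$ while $r-|F\cup G| = c-1-|G|\geq 0$; thus $\tilde H^{-1}(\lk_\Delta(F\cup G))=\kk$ exhibits $F\cup G$ as a singular face of dimension $\geq r-c$, contradicting the hypothesis. Equivalently, the vanishing you already derive, $\tilde H^j(\lk_{\lk F}G)=0$ for $j<c-1-|G|$ and all $G\in\lk F$, is strictly stronger than Reisner's condition for $\lk F$ whenever $\dim\lk F<c-1$, since it demands $\tilde H^{-1}(\{\nothing\})=0$ at any undersized facet. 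So $\dim\lk F=c-1$ follows from your own estimates rather than needing a convention. With that observation inserted in place of the final paragraph, your proof is complete and matches the paper's.
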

\begin{proof}
If $\Delta$ has singularity dimension~$s$, then $\Delta$ is CM of the
appropriate dimension along every face of dimension $\geq s+1$ by
definition and Reisner's criterion (if~$\lk F$ had too small
dimension, then the link of any facet containing~$F$ would give this
fact away, by Remark~\ref{r:pure}); thus $\Delta$ is CM in codimension
$r-s-1$.  On the other hand, if $\Delta$ is CM in codimension $r-i-1$
then every face of dimension $i+1$ is nonsingular by definition.
\end{proof}

\begin{corollary}\label{c:singdim}
Fix a simplicial complex $\Delta$ of dimension~$r$.  The singularity
dimension of~$\Delta$ is less than~$m$ if and only if $\Delta$ is CM
in codimension $r-m$.
\end{corollary}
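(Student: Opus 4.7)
My plan is to read the corollary directly off the preceding Proposition, which identifies the singularity dimension of $\Delta$ with $\min\{i : \Delta \text{ is CM in codimension } r-i-1\}$. The case $i = m-1$ of this minimum description picks out precisely the codimension $r-m$ appearing in the corollary.

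One direction is then immediate: if $\Delta$ is CM in codimension $r-m$, then $i = m-1$ is an admissible witness in the Proposition's minimum, so the singularity dimension is at most $m-1$, hence strictly less than $m$. For the reverse direction, assuming singularity dimension $s < m$, I would invoke the Proposition to get that $\Delta$ is CM in codimension $r-s-1 \ge r-m$, and then descend one codimension at a time. The requisite monotonicity statement---CM in codimension $c$ implies CM in codimension $c-1$---I would check as follows: given a face $G$ of dimension $r - c + 1$, pick a vertex $v \in G$ and set $F = G \setminus \{v\}$, a face of dimension $r - c$; by hypothesis $\lk F$ is Cohen--Macaulay of dimension $c - 1$, and since $\lk G$ is the link of $\{v\}$ inside $\lk F$, it is itself Cohen--Macaulay of dimension $c - 2$ by Reisner's criterion applied to a link within a Cohen--Macaulay complex.

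The only point that needs attention is the monotonicity step, but it is routine given Reisner's criterion, and iterating it descends from codimension $r - s - 1$ down to codimension $r - m$, completing the argument.
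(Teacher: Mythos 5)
Your argument is correct and takes the same route the paper intends: the corollary is read off the preceding Proposition, and the only genuine content is the monotonicity of ``CM in codimension $c$'' in $c$, which you prove carefully via the identity $\lk_\Delta G = \lk_{\lk_\Delta F}\{v\}$ and the standard fact that links in a Cohen--Macaulay complex are Cohen--Macaulay of the expected dimension. The paper states the corollary without proof, leaving this monotonicity implicit (it is already used in the proof of the Proposition itself, where CM along every face of dimension $\ge s+1$ is established); your version makes it explicit, and the argument is sound, with the only inessential gap being the degenerate case $c > r+1$, where both conditions reduce to ``$\Delta$ is Cohen--Macaulay'' and the claim is trivial.
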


This corollary and the next lemma form the bridge to general
commutative algebra.

\begin{lemma}\label{l:CMalong} A simplicial complex
$\Delta$ is CM of dimension~$i$ along~$F$ if and only if the
localization $\kk[\Delta]_{P_F}$ at the prime ideal $P_F = (x_j : j
\notin F)$ of~$F$ is a Cohen--Macaulay ring of Krull dimension~$i+1$.
\end{lemma}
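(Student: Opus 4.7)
The natural strategy is to show that $\kk[\Delta]_{P_F}$ is, up to adjoining a field of rational functions in the variables indexed by~$F$, nothing but $\kk[\lk F]$ localized at its irrelevant ideal. This reduces both the dimension count and the Cohen--Macaulay condition to the corresponding statements about $\kk[\lk F]$, via flat base change. The plan has two substantive steps: identifying a suitable intermediate localization combinatorially, and then invoking the standard behavior of Krull dimension and Cohen--Macaulayness under faithfully flat maps with field fibers.

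First, since no $x_i$ with $i\in F$ lies in $P_F$, the ring $\kk[\Delta]_{P_F}$ is a further localization of $T := \kk[\Delta]\bigl[x_i^{-1} : i \in F\bigr]$. I would verify the classical Stanley--Reisner localization identity
$$
  T \;\cong\; \kk[\lk F]\bigl[x_i^{\pm 1} : i \in F\bigr].
$$
For each vertex $j\notin F$ with $\{j\}\cup F\notin\Delta$, the monomial $x_F x_j$ generates a term of $I_\Delta$, so $x_j$ dies in~$T$ once $x_F$ is inverted. For the surviving variables (i.e., vertices of $\lk F$), a squarefree monomial $x_S$ with $S\subseteq[n]\setminus F$ vanishes in $T$ if and only if $S\cup F\notin\Delta$, i.e., $S\notin\lk F$. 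These are exactly the defining relations of the Stanley--Reisner ring of $\lk F$, and no others survive, yielding the displayed isomorphism.

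Under this identification, the prime $P_F T$ corresponds to $\mathfrak{m}_{\lk F}\cdot R$, where $R := \kk[\lk F][x_i^{\pm 1}:i\in F]$ and $\mathfrak{m}_{\lk F}$ is the irrelevant ideal of $\kk[\lk F]$. Therefore $\kk[\Delta]_{P_F}\cong R_{\mathfrak{m}_{\lk F} R}$, and the canonical map
$$
  \kk[\lk F]_{\mathfrak{m}_{\lk F}} \;\hookrightarrow\; R_{\mathfrak{m}_{\lk F} R}
$$
is a faithfully flat local homomorphism (tensoring with the flat $\kk$-algebra $\kk[x_i^{\pm 1}]$ and then localizing) whose closed fiber is the field $\kk(x_i:i\in F)$. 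By the dimension formula for flat local maps, $\dim \kk[\Delta]_{P_F} = \dim \kk[\lk F]_{\mathfrak{m}_{\lk F}}+0 = \dim(\lk F)+1$, and by ascent/descent of the Cohen--Macaulay property across flat local maps with Cohen--Macaulay fibers, $\kk[\Delta]_{P_F}$ is Cohen--Macaulay iff $\kk[\lk F]_{\mathfrak{m}_{\lk F}}$ is, iff $\kk[\lk F]$ is (standard graded $\implies$ CM is detected at the irrelevant ideal), iff $\lk F$ is a Cohen--Macaulay complex. Setting dimension equal to $i+1$ packages these equivalences into the claim. The only non-bookkeeping step is the Stanley--Reisner localization identity; once that is nailed down, the rest is routine flat base change, so I anticipate no serious obstacle.
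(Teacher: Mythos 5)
Your proof is correct and takes essentially the same route as the paper: both rest on the identification $\kk[\Delta][x_i^{-1}:i\in F]\cong \kk[\lk F][x_i^{\pm1}:i\in F]$ followed by Reisner's criterion. You merely spell out the flat-base-change bookkeeping (faithfully flat local map with field closed fiber) that the paper dismisses as ``straightforward.''
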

\begin{proof}
Localizing $\kk[\Delta]$ by inverting the variables indexed by~$F$
results in $\kk[\lk F][x_i^{\pm1} : i \in F]$.  The lemma is
straightforward from this and Reisner's criterion
\cite[Theorem~5.53]{cca}.
\end{proof}

\begin{defn}\label{d:module}
A module $M$ over a noetherian ring is \emph{CM of dimension~$i$
locally at~$\pp$} if the localization $M_\pp$ at the prime ideal~$\pp$
is a Cohen--Macaulay module of Krull dimension~$i$.  If $\dim M = d$,
then $M$ is \emph{CM in codimension~$c$} if $M$ is CM of dimension~$c$
locally at every prime~$\pp$ of dimension $d-c$ in the support of~$M$,
or if $c > d$ and $M$ is Cohen--Macaulay.
\end{defn}

\begin{theorem}\label{t:singdim}
A simplicial complex $\Delta$ of dimension~$r$ is CM in codimension
$c$ if and only if\/ $\kk[\Delta]$ is~CM in codimension~$c$.  In
particular, the singularity dimension of~$\Delta$ is less than~$m$ if
and only if\/ $\kk[\Delta]$ is CM in codimension $r-m$.
\end{theorem}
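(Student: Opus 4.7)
The second statement follows immediately from the first together with Corollary~\ref{c:singdim}, so the essential task is the equivalence between the combinatorial condition on $\Delta$ and the module-theoretic condition on $\kk[\Delta]$. The dictionary between them is Lemma~\ref{l:CMalong}, which matches CM of dimension~$i$ along a face~$F$ with CM of Krull dimension $i+1$ for the localization $\kk[\Delta]_{P_F}$; the one genuinely new difficulty is that the module-theoretic definition demands CM at \emph{every} prime of the correct codimension, not just the graded primes $P_F$.

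A preliminary monotonicity step is needed: if $\Delta$ is CM in codimension~$c$, then $\lk_\Delta G$ is Cohen--Macaulay of dimension $r-|G|$ for every face $G$ with $\dim G \ge r-c$. For such $G$, choose $H \subseteq G$ with $\dim H = r-c$; then $G \setminus H$ is a face of $\lk H$ and $\lk_{\lk H}(G \setminus H) = \lk_\Delta G$. Since $\lk H$ is Cohen--Macaulay of dimension $c-1$ by hypothesis, the standard inheritance of CM-ness to links in pure Cohen--Macaulay complexes yields $\lk_\Delta G$ CM of the complementary dimension $r-|G|$. A byproduct is that every facet of $\Delta$ containing such a $G$ has dimension exactly~$r$, whence $\mathrm{ht}(P_G) = r+1-|G|$.

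The ``if'' direction is then immediate: for $F$ of dimension $r-c$, the prime $P_F$ has $\dim \kk[\Delta]/P_F = |F| = r+1-c$, so by hypothesis $\kk[\Delta]_{P_F}$ is CM of Krull dimension~$c$, and Lemma~\ref{l:CMalong} gives $\lk F$ CM of dimension $c-1$. For ``only if'', consider any prime $\pp \subset \kk[\Delta]$ with $\dim \kk[\Delta]/\pp = r+1-c$, and let $P_G = \pp^*$ be the largest graded prime contained in~$\pp$; the containment forces $|G| \ge r+1-c$, so by the monotonicity step and Lemma~\ref{l:CMalong}, $\kk[\Delta]_{P_G}$ is Cohen--Macaulay of Krull dimension $r+1-|G|$, whence its further localization $\kk[\Delta]_\pp$ is Cohen--Macaulay. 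The Krull dimension of $\kk[\Delta]_\pp$ is then computed using catenariness of the affine $\kk$-algebra $\kk[\Delta]$ and Cohen--Macaulayness of the polynomial ring $\kk[\Delta]/P_G = \kk[x_i : i \in G]$: concatenating a saturated chain below $P_G$ with one from $\pp/P_G$ gives $\mathrm{ht}(\pp) \ge \mathrm{ht}(P_G) + \mathrm{ht}(\pp/P_G) = (r+1-|G|) + (|G|-(r+1-c)) = c$, while the universal bound $\mathrm{ht}(\pp) + \dim \kk[\Delta]/\pp \le \dim \kk[\Delta] = r+1$ supplies the reverse inequality.

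The principal obstacle is the treatment of non-graded primes in the ``only if'' direction: the module-theoretic definition forces attention to localizations at all primes of codimension~$c$, and bridging this to the simplicial condition (which sees only graded primes~$P_F$) requires both the monotonicity lemma, to certify CM-ness of $\kk[\Delta]_{P_G}$ for faces $G$ of dimension exceeding $r-c$, and the catenariness-plus-height computation, to pin down the Krull dimension of $\kk[\Delta]_\pp$.
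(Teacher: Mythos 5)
Your instinct to flag the non-graded primes is exactly right: the paper's one-line proof glosses over the fact that Lemma~\ref{l:CMalong} only controls localizations at the monomial primes~$P_F$, whereas Definition~\ref{d:module} quantifies over \emph{all} primes~$\pp$ of the right dimension. But your resolution of the issue contains a genuine error in the ``only if'' direction. You write that $\kk[\Delta]_{P_G}$ is Cohen--Macaulay ``whence its further localization $\kk[\Delta]_\pp$ is Cohen--Macaulay,'' but the direction of localization is reversed. Since $P_G \subseteq \pp$, the multiplicative set $\kk[\Delta] \setminus P_G$ contains $\kk[\Delta] \setminus \pp$, so it is $\kk[\Delta]_{P_G}$ that is a further localization of $\kk[\Delta]_\pp$, not the other way around. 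Knowing that $\kk[\Delta]_{P_G}$ is Cohen--Macaulay does not yield Cohen--Macaulayness of $\kk[\Delta]_\pp$ by a trivial localization argument; the implication you need is the nontrivial one.

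The correct bridge is the Matijevic--Roberts theorem (see the exercises in Chapter~2 of~\cite{BrHe}): for a Noetherian $\ZZ^n$-graded ring $R$ and any prime~$\pp$, if $R_{\pp^*}$ is Cohen--Macaulay, where $\pp^*$ denotes the largest graded prime contained in~$\pp$, then $R_\pp$ is Cohen--Macaulay. Substituting this for your ``further localization'' step repairs the argument: the monotonicity lemma plus Lemma~\ref{l:CMalong} certify $\kk[\Delta]_{P_G}$ CM, Matijevic--Roberts promotes this to $\kk[\Delta]_\pp$ CM, and your catenariness computation correctly pins down $\dim \kk[\Delta]_\pp = c$. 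Everything else in your proposal is sound, including the monotonicity step and the ``if'' direction. The paper's proof is silent on the passage from graded to arbitrary primes, presumably because the authors regard it as standard; your proposal is more careful than the paper in isolating this step, but errs precisely where the graded-to-local theorem must be invoked rather than an elementary localization argument.
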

\begin{proof}
Use Lemma~\ref{l:CMalong} for the first sentence and add
Corollary~\ref{c:singdim} for the second.
\end{proof}

\begin{remark}\label{r:module}
Definition~\ref{d:module} is related to, but different from, Serre's
condition~$S_c$.  Both conditions can be interpreted as stipulating
the Cohen--Macaulay condition near the prime ideals of certain large
irreducible subvarieties, but $S_c$ requires a module to be
Cohen--Macaulay near every prime whose local ring has \emph{depth} at
least~$c$, whereas Definition~\ref{d:module} requires a module to be
Cohen--Macaulay at every prime whose local ring has \emph{dimension}
at least~$c$.  These two conditions manifest differently in local
cohomology: a module~$M$ over a regular local ring with maximal
ideal~$\mm$ satisfies Serre's condition~$S_c$ when its local
cohomology $H^i_\mm(M)$ vanishes in cohomological degrees $i < c$,
whereas $M$ is~CM in codimension~$c$ when the Matlis dual of its local
cohomology satisfies $\dim H^i_\mm(M)^\vee < \dim M - c$ for~$i < \dim
M$ (apply local duality to Corollary~\ref{c:regular} below).  When $M$
is a ring, this means that the homology of its dualizing complex has
dimension $< \dim M - c$ except at the end where it has
dimension~$\dim M$.  In short, Serre's condition bounds the
cohomological degrees of the nonvanishing local cohomology modules,
whereas CM in a fixed codimension bounds their Krull dimensions.
See~\cite{murai-terai} for recent work on combinatorial implications
of Serre's conditions on face rings.
\end{remark}

\section{Modules Cohen--Macaulay in a fixed codimension}\label{modules}

With Theorem~\ref{t:singdim} in mind, Theorem~\ref{main} is a
statement about the behavior, under quotients by generic linear forms,
of certain graded rings that are Cohen--Macaulay in a fixed codimension.
The end result of this section, Theorem~\ref{t:algebraic},
demonstrates the rather general nature of this statement: it holds for
any finitely generated graded module over any finitely generated
standard $\ZZ$-graded $\kk$-algebra, where $\kk$ is an infinite field.

Since the point of the developments of this section 
is to ignore certain modules of
finite length, the main result is best proved in the language of
sheaves on projective schemes, which we do in Theorem~\ref{t:codim}.
In that formulation, it is especially easy to see how the developments
here relate general Bertini-type theorems in modern algebraic
geometry; see Remark~\ref{r:general}.

We begin by rephrasing Definition~\ref{d:module} in geometric terms.
Recall that the \emph{dimension} of a point~$\pp$ in a scheme is the
supremum of the lengths~$\ell$ of chains $\pp = \pp_\ell > \cdots >
\pp_1 > \pp_0$ where $\pp_i > \pp_j$ means that $\pp_j$ lies in the
closure of~$\pp_i$.  We use the convention that a sheaf or module is
zero if and only if its Krull dimension is negative.

\begin{defn}\label{d:codim}
A coherent sheaf $\FF$ on a noetherian scheme with structure
sheaf~$\OO$ is \emph{CM of dimension~$i$ locally at~$\pp$} if the germ
$\FF_\pp$ is a Cohen--Macaulay $\OO_\pp$-module of Krull
dimension~$i$.  If $\dim\FF = r$, then $\FF$ is \emph{CM in
codimension~$c$} if $\FF$ is CM of dimension~$c$ locally at every
point~$\pp$ of dimension $r-c$ in the support of~$\FF$, or if $c > r$
and $\FF$ is Cohen--Macaulay.
\end{defn}

\begin{proposition}\label{p:regular}
A coherent sheaf $\FF$ of dimension~$r$ on a regular scheme~$X$ of
dimension~$\delta$ is CM in codimension~$c$ if and only if $\dim
\Ext^i_X(\FF,\OO_X) < r - c$ for $i > \delta - r$.
\end{proposition}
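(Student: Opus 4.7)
The plan rests on local duality over the regular local rings $\OO_{X,\pp}$: if $R$ is regular local of dimension $n$ and $M$ is a finitely generated $R$-module, then $M$ is Cohen--Macaulay of Krull dimension $d$ if and only if $\Ext^j_R(M, R) = 0$ for every $j \neq n - d$. Write $\EE^i := \Ext^i_X(\FF, \OO_X)$ and $\delta_\pp := \delta - \dim \pp = \dim \OO_{X,\pp}$.

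First I would observe that $\dim \FF_\pp \leq r - \dim \pp$ for every $\pp \in \mathrm{Supp}(\FF)$: any chain in $\mathrm{Supp}(\FF_\pp)$ concatenates with a chain of length $\dim \pp$ inside $\overline{\{\pp\}} \subseteq \mathrm{Supp}(\FF)$ to yield a chain in $\mathrm{Supp}(\FF)$ of total length at most $r$. Consequently $\Ext^j_{\OO_{X,\pp}}(\FF_\pp, \OO_{X,\pp}) = 0$ for $j < \delta_\pp - (r - \dim \pp) = \delta - r$, so $\EE^i = 0$ for $i < \delta - r$.

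The central pointwise lemma I would then prove is that for each $\pp \in \mathrm{Supp}(\FF)$, the stalk $\FF_\pp$ is Cohen--Macaulay of dimension $r - \dim \pp$ if and only if $\EE^i_\pp = 0$ for every $i > \delta - r$. Combined with the preliminary vanishing, such Ext-vanishing concentrates the Ext of $\FF_\pp$ in degree $\delta - r$, and since $\FF_\pp$ is nonzero this top Ext must itself be nonzero (a standard minimal-resolution computation over the regular local ring $\OO_{X,\pp}$). Thus $\mathrm{pd}(\FF_\pp) = \delta - r$, Auslander--Buchsbaum gives $\mathrm{depth}(\FF_\pp) = r - \dim \pp$, and combined with $\mathrm{depth}(\FF_\pp) \leq \dim \FF_\pp \leq r - \dim \pp$ this forces the CM conclusion of the claimed dimension. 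The reverse direction is immediate from local duality.

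Applying this lemma at each $\pp \in \mathrm{Supp}(\FF)$ of dimension exactly $r - c$ translates the CM-in-codimension-$c$ condition into the statement that $\mathrm{Supp}(\EE^i)$ contains no such $\pp$ for $i > \delta - r$. Because the closure of any point of dimension $d$ is irreducible of dimension $d$ and hence contains points of every dimension from $0$ to $d$, the absence from the closed set $\mathrm{Supp}(\EE^i)$ of points of dimension exactly $r - c$ is equivalent to the absence of all points of dimension $\geq r - c$, i.e., to $\dim \EE^i < r - c$. The main obstacle is the pointwise lemma; once it is in hand via local duality, Auslander--Buchsbaum, and the nonvanishing of the top $\Ext$, the remaining translation from stalk conditions to a global-dimension inequality on Ext sheaves is routine.
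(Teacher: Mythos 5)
Your proof is correct and follows essentially the same route as the paper's: both arguments reduce the sheaf-level dimension bound to a pointwise condition at points of dimension $r-c$, identify $\Ext^i_X(\FF,\OO_X)_\pp$ with $\ext^i_{\OO_{X,\pp}}(\FF_\pp,\OO_{X,\pp})$ by coherence, and then use the Auslander--Buchsbaum formula together with the nonvanishing of the top $\ext$ (equivalently, the depth-via-$\ext$ characterization, which is what the paper's citation to Bruns--Herzog Exercise 3.1.24 encodes) and the a priori bound $\dim\FF_\pp \leq r - \dim\pp$. The only difference is expository: you isolate the pointwise equivalence as a named lemma, whereas the paper folds it into one chain of equalities.
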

\begin{proof}
The sheaf $\Ext^i_X(\FF,\OO_X)$ has dimension $< r - c$ if and only if
its germ at every point~$\pp$ of dimension $r - c$ is zero.  The germ
in question is $\Ext^i_X(\FF,\OO_X)_\pp =
\ext^i_{\OO_{X,\pp}}(\FF_\pp,\OO_{X,\pp})$ because $\FF$ is coherent
and $X$ is noetherian.  At any point~$\pp$ in the support of~$\FF$,
the germ $\FF_\pp$ at~$\pp$ has dimension at most $r - \dim\pp$ and
depth equal to $\delta - \dim\pp - \max\{i :
\ext^i_{\OO_{X,\pp}}(\FF_\pp,\OO_{X,\pp}) \neq 0\}$, the latter by
\cite[Exercise~3.1.24]{BrHe}.  The dimension and depth equal~$c$ for
all points~$\pp$ of dimension $r - c$ if and only if the maximum is
$\delta - r$ for all such~$\pp$, and the first sentence of the proof
implies that this is equivalent to the desired dimension condition
on~$\Ext^i_X(\FF,\OO_X)$.
\end{proof}

The following algebraic version is immediate from the affine case of
Proposition~\ref{p:regular}.

\begin{corollary}\label{c:regular}
A finitely generated module $M$ of dimension~$d$ over $S =
\kk[x_1,\ldots,x_n]$ is CM in codimension~$c$ if and only if
$\dim\ext^i_S(M,S) < d - c$ for $i > n - d$.\qed
\end{corollary}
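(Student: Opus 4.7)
The plan is to deduce this corollary directly from Proposition~\ref{p:regular} by taking $X = \mathrm{Spec}(S) = \mathbb{A}^n_\kk$, which is a regular noetherian scheme of dimension $\delta = n$, and $\FF = \widetilde M$, the coherent sheaf on $X$ associated to $M$, which has dimension $r = d$. The whole task is to check that, on an affine scheme, the sheaf-theoretic hypothesis and conclusion of the proposition translate word-for-word into the module-theoretic hypothesis and conclusion of the corollary.

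First I would verify that Definitions~\ref{d:module} and~\ref{d:codim} agree under the bijection between points of $X$ and prime ideals of $S$: the germ $\FF_\pp$ is the localization $M_\pp$, the local ring $\OO_{X,\pp}$ is $S_\pp$, and the dimension of a point $\pp \in X$ equals $\dim S/\pp$, which is the notion of dimension used (implicitly) in Definition~\ref{d:module}. It follows that $\FF$ is CM of dimension~$i$ locally at $\pp$ in the sheaf sense precisely when $M$ is so in the module sense, and likewise for the CM-in-codimension-$c$ condition applied to all primes in the support.

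Next I would translate the Ext side. Since $X$ is affine noetherian and $\FF$ is coherent, there is a canonical isomorphism $\Ext^i_X(\widetilde M,\OO_X) \cong \widetilde{\ext^i_S(M,S)}$, and the Krull dimension of a coherent sheaf on an affine scheme equals the Krull dimension of its module of global sections. Substituting $r = d$ and $\delta = n$ into Proposition~\ref{p:regular}, the bound $\dim\Ext^i_X(\FF,\OO_X) < r - c$ for $i > \delta - r$ becomes $\dim \ext^i_S(M,S) < d - c$ for $i > n - d$, which is exactly the statement of the corollary. No genuine obstacle arises; the content of the argument is purely the affine dictionary between coherent sheaves on $\mathrm{Spec}(S)$ and finitely generated $S$-modules, together with the fact that this dictionary preserves both local behavior at primes and the Ext functor.
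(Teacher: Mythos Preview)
Your proposal is correct and follows exactly the approach the paper indicates: the paper simply states that the corollary is ``immediate from the affine case of Proposition~\ref{p:regular}'' and gives no further proof, while you have spelled out in detail the affine dictionary (points $\leftrightarrow$ primes, $\Ext$ sheaves $\leftrightarrow$ $\ext$ modules, Definitions~\ref{d:module} and~\ref{d:codim} agreeing) that makes this immediate.
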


Applying the above corollary to the second sentence of
Theorem~\ref{t:singdim} yields the following important consequence,
noting that $c = r - m \iff d - c = m + 1$ when $r = d - 1$.

\begin{corollary}\label{c:ext}
A simplicial complex $\Delta$ of dimension $r = d-1$ has singularity
dimension less than~$m$ if and only if\/ $\dim\ext^i_S(\kk[\Delta],S)
\leq m$ for $i > n - d$.\qed
\end{corollary}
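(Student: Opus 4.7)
The plan is to combine Theorem~\ref{t:singdim} with Corollary~\ref{c:regular} via a short index calculation. First I would apply Theorem~\ref{t:singdim} to reformulate the singularity-dimension hypothesis as the statement that $\kk[\Delta]$ is CM in codimension $c := r - m$ as an $S$-module. Since $\Delta$ has dimension $r = d - 1$, the face ring $\kk[\Delta]$ has Krull dimension exactly $d$, so the finite generation and dimension hypotheses of Corollary~\ref{c:regular} are met.

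Next I would invoke Corollary~\ref{c:regular}, which converts the condition ``CM in codimension $c$'' into the Ext-dimension bound $\dim\ext^i_S(\kk[\Delta], S) < d - c$ for every $i > n - d$. The arithmetic $d - c = d - (r - m) = (d - r) + m = 1 + m$ then turns the strict inequality into $\dim\ext^i_S(\kk[\Delta], S) \leq m$, exactly the conclusion of the corollary.

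There is essentially no real obstacle here beyond the bookkeeping, as the parenthetical remark in the excerpt ($c = r - m \iff d - c = m + 1$ when $r = d - 1$) makes explicit. The one point that deserves a moment's care is the boundary case $m \geq r + 1$, where $c = r - m$ is non-positive: the second clause of Definition~\ref{d:module} (resp.\ the simplicial analogue in the paper) then collapses to asking that $\kk[\Delta]$ be Cohen--Macaulay, while simultaneously the singularity dimension is automatically less than $m$ whenever $\Delta$ is Cohen--Macaulay, so both sides of the equivalence behave compatibly. In that range, standard local duality forces $\ext^i_S(\kk[\Delta], S) = 0$ for $i > n - d$, so the claimed dimension bound is trivially satisfied. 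Thus the whole argument reduces to citing Theorem~\ref{t:singdim} followed by Corollary~\ref{c:regular} and performing the substitution above.
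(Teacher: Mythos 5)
Your main argument is exactly the paper's: apply the second sentence of Theorem~\ref{t:singdim} to translate ``singularity dimension less than $m$'' into ``$\kk[\Delta]$ is CM in codimension $c = r - m$,'' then apply Corollary~\ref{c:regular} with $\dim\kk[\Delta] = d$ to get $\dim\ext^i_S(\kk[\Delta],S) < d - c = m + 1$ for $i > n - d$, which is $\leq m$. This is precisely how the paper proves the corollary, and the index bookkeeping is right.

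The one place you go off the rails is the side remark about the boundary case $m \geq r+1$. You say that when $c = r - m$ is non-positive, ``the second clause of Definition~\ref{d:module} then collapses to asking that $\kk[\Delta]$ be Cohen--Macaulay.'' That second clause is triggered when $c > d$, not when $c \leq 0$; for $m \geq r+1$ you have $c \leq -1$, which is not $> d$. What actually happens is that the \emph{first} clause becomes vacuous (there are no primes of dimension $d - c > d$ in the support), so every module is CM in codimension $c$ for $c < 0$; similarly, since top-dimensional facets are never singular, the singularity dimension is at most $r-1$ and hence automatically $< m$. You also assert that ``standard local duality forces $\ext^i_S(\kk[\Delta], S) = 0$ for $i > n-d$'' in that range, which is false in general (vanishing there is equivalent to Cohen--Macaulayness); the correct observation is that $\dim\ext^i_S(\kk[\Delta],S) \leq n - i < d$ always holds for $i > n-d$, which suffices. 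None of this affects your main argument, since Corollary~\ref{c:regular} as stated handles $c \leq 0$ correctly on its own, but the parenthetical reasoning should be fixed.
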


\begin{remark}
Since local cohomology and $\ext$ into~$S$ are Matlis dual by local
duality, and since the Krull dimension of a module $M$ equals the
order to which $\lambda = 1$ is a pole of the $\ZZ$-graded Hilbert
series $F(M, \lambda)$ of $M$, Corollary~\ref{c:ext} is equivalent to
Corollary~\ref{comb-char}.
\end{remark}

Another consequence of Proposition~\ref{p:regular} is a
characterization of finite local cohomology.

\begin{corollary}\label{c:algebraic}
A finitely generated standard $\ZZ$-graded $S$-module~$M$ of
dimension~$d$ has finite local cohomology if and only if $M$ is CM in
codimension~$d-1$.
\end{corollary}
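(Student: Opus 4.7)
The plan is to reduce both sides of the claimed equivalence to the same condition on the Krull dimensions of $\ext^i_S(M,S)$: one side via Corollary~\ref{c:regular}, the other via graded local duality. Neither ingredient is new; the corollary essentially packages a bookkeeping observation on top of results already in place.

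On the ``CM in codimension $d-1$'' side, set $c = d - 1$ in Corollary~\ref{c:regular}: $M$ is CM in codimension $d - 1$ if and only if $\dim \ext^i_S(M,S) < d - (d-1) = 1$ for every $i > n - d$. Under the convention that the zero module has negative Krull dimension, this says exactly that for each $i > n - d$ the module $\ext^i_S(M,S)$ has Krull dimension at most~$0$, equivalently has finite length, equivalently is finite dimensional over~$\kk$.

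On the ``finite local cohomology'' side, graded local duality over the polynomial ring $S$ provides a graded isomorphism
$$
  H^i_\m(M) \;\cong\; \ext^{n-i}_S\!\bigl(M,\, S(-n)\bigr)^{\vee}
$$
for every $i$, where $(-)^{\vee}$ is the graded $\kk$-dual. Since $(-)^{\vee}$ preserves finite-dimensionality over $\kk$, the module $H^i_\m(M)$ is finite dimensional over $\kk$ if and only if $\ext^{n-i}_S(M,S)$ is; letting $i$ range over $0,1,\ldots,d-1$, the property that $M$ has finite local cohomology translates into ``$\ext^j_S(M,S)$ has finite $\kk$-dimension for every $j > n-d$'', which coincides with the condition extracted in the previous paragraph. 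Combining the two yields the stated equivalence.

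I do not expect a real obstacle: all the homological substance has already been packaged in Corollary~\ref{c:regular} and in local duality. The only subtle point is the dimensional bookkeeping, namely verifying that ``$\dim < 1$'' (with the convention that the zero module has negative dimension) coincides with ``finite dimensional over~$\kk$'' for the finitely generated $S$-modules $\ext^i_S(M,S)$; this is the standard fact that a finitely generated $S$-module has Krull dimension at most $0$ exactly when it is of finite length.
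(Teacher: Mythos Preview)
Your proof is correct and follows essentially the same approach as the paper: both invoke Corollary~\ref{c:regular} with $c=d-1$ to translate the CM-in-codimension condition into $\dim\ext^i_S(M,S)\le 0$ for $i>n-d$, and both use graded local duality to identify this with finiteness of $H^{n-i}_\mm(M)$. The paper's version is just more terse.
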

\begin{proof}
The module $\ext^i_S(M,S)$ is the graded Matlis dual of
$H^{n-i}_\mm(M)$ by local duality.  Thus $M$ has finite local
cohomology if and only if $\dim \ext^i_S(M,S) \leq 0$ for $i > n - d$.
\end{proof}

Next is the main result of this section, a Bertini-type theorem for CM
in codimension~$c$.

\begin{theorem}\label{t:codim}
Fix a coherent sheaf $\FF$ on~$\PP^n$ and a linear form $\theta$
on~$\PP^n$ vanishing on a hyperplane~$H$.  Assume that $\theta$ is
nonzero at every associated point of\/ $\bigoplus_i
\Ext^i_{\PP^n}(\FF,\OO_{\PP^n})$.  For $c < \dim\FF$, the sheaf $\FF$
is CM in codimension~$c$ if and only if its restriction $\FF|_H$ is.
If~$\FF$ is Cohen--Macaulay (this is the case of $c \geq \dim\FF$),
its restriction $\FF|_H$ is also Cohen--Macaulay.
\end{theorem}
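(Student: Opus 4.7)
The plan is to translate both sides via Proposition~\ref{p:regular} into dimension bounds on the Ext sheaves $\EE^i := \Ext^i_{\PP^n}(\FF, \OO_{\PP^n})$ and their analogues on~$H$, and to match them using the short exact sequence $0 \to \FF(-1) \xrightarrow{\cdot\theta} \FF \to \FF|_H \to 0$. To start, I need $\theta$ to act as a nonzerodivisor on~$\FF$; but on a regular scheme every associated point of $\FF$ lies in some $\Ass(\EE^i)$ (at a prime $\pp \in \Ass(\FF)$, Auslander--Buchsbaum forces $\EE^i_\pp \ne 0$ for $i$ equal to the height of~$\pp$), so the hypothesis already supplies what is needed. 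Applying sheaf Ext into $\OO_{\PP^n}$ to the short exact sequence then yields a long exact sequence in the $\EE^i$ and $\Ext^i_{\PP^n}(\FF|_H, \OO_{\PP^n})$; since $\theta$ is also a nonzerodivisor on every $\EE^i$, each map $\EE^i \xrightarrow{\cdot\theta} \EE^i(1)$ is injective, and the long exact sequence collapses into isomorphisms
$$
  \Ext^i_{\PP^n}(\FF|_H, \OO_{\PP^n}) \;\cong\; \EE^{i-1}(1)/\theta\EE^{i-1}.
$$

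Next, to interpret the left-hand side intrinsically on~$H$, I will invoke Grothendieck duality for the smooth codimension-one embedding $\iota: H \hookrightarrow \PP^n$: since $\iota^!\OO_{\PP^n} = \OO_H(1)[-1]$, we obtain $\Ext^i_{\PP^n}(\FF|_H, \OO_{\PP^n}) \cong \Ext^{i-1}_H(\FF|_H, \OO_H)(1)$. Combined with the previous display, and disregarding line-bundle twists (which do not affect the Krull dimension of the support), this yields $\dim \Ext^j_H(\FF|_H, \OO_H) = \dim \EE^j - 1$ for every~$j$, under the convention that the zero sheaf has negative dimension so that the identity persists when $\EE^j$ vanishes.

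To conclude, set $r = \dim \FF$. Proposition~\ref{p:regular} characterizes $\FF$ as CM in codimension~$c$ by the condition $\dim \EE^i < r - c$ for $i > n - r$, and it characterizes $\FF|_H$ (of dimension $r - 1$ on the regular scheme $H$ of dimension $n - 1$) as CM in codimension~$c$ by the condition $\dim \Ext^i_H(\FF|_H, \OO_H) < (r-1) - c$ for $i > (n-1) - (r-1) = n - r$. The dimension identity above renders these two conditions verbatim equivalent, proving the equivalence for $c < r$. In the Cohen--Macaulay case the sheaves $\EE^i$ vanish for all $i > n - r$, which forces $\Ext^i_H(\FF|_H, \OO_H) = 0$ in the same range, so $\FF|_H$ is Cohen--Macaulay. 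The main obstacles will be the Grothendieck duality computation (tracking the shift and the twist by $\OO_H(1)$) and the containment $\Ass(\FF) \subseteq \bigcup_i \Ass(\EE^i)$; once these are in hand, everything follows mechanically from the one long exact sequence.
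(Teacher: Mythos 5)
Your proof is correct and lands on the same key fact as the paper's --- that $\Ext^j_H(\FF|_H,\OO_H)$ is, up to a twist, the restriction to~$H$ of $\Ext^j_{\PP^n}(\FF,\OO_{\PP^n})$, so that its dimension drops by exactly one --- but reaches it by a different technical route. The paper dualizes a locally free resolution $\EE_\spot$ of~$\FF$ and restricts to~$H$, noting that the two operations commute, so that $H^\spot(\EE^\spot|_H) = \Ext^\spot_H(\FF|_H,\OO_H)$ falls out directly from a snake-lemma computation. You instead apply sheaf $\Ext$ into $\OO_{\PP^n}$ to the short exact sequence $0 \to \FF(-1) \to \FF \to \FF|_H \to 0$, collapse the resulting long exact sequence using that $\theta$ is a nonzerodivisor on each $\Ext^i_{\PP^n}(\FF,\OO_{\PP^n})$, and then invoke Grothendieck--Verdier duality for the regular codimension-one embedding to translate $\Ext^i_{\PP^n}(\FF|_H,\OO_{\PP^n})$ into $\Ext^{i-1}_H(\FF|_H,\OO_H)(1)$. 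Your route costs an appeal to Grothendieck duality, which the paper avoids, but buys a clarification the paper leaves implicit: your observation that every associated point of~$\FF$ is an associated point of some $\Ext^i_{\PP^n}(\FF,\OO_{\PP^n})$ explains why the stated genericity hypothesis already forces $\theta$ to be a nonzerodivisor on~$\FF$. That containment needs one more line than you gave: Auslander--Buchsbaum only delivers nonvanishing of the germ, so to pass from support to $\ass$ you should note that at a depth-zero point~$\pp$ the top nonvanishing $\ext^i_{\OO_\pp}(\FF_\pp,\OO_\pp)$ is Matlis dual to $H^0_{\mm_\pp}(\FF_\pp)$, hence of finite length, which makes $\mm_\pp$ its unique associated prime. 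One further small caution: the displayed ``identity'' $\dim\Ext^j_H(\FF|_H,\OO_H) = \dim\Ext^j_{\PP^n}(\FF,\OO_{\PP^n}) - 1$ is not literal when $\dim\Ext^j_{\PP^n}(\FF,\OO_{\PP^n}) = 0$ (the left side is then the zero sheaf, not a sheaf of dimension~$-1$); the translation of the inequalities from Proposition~\ref{p:regular} still goes through, and this borderline case is exactly where the restriction $c < \dim\FF$ is used, which you correctly isolate.
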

\begin{proof}
As the final sentence is well-known for Cohen--Macaulay sheaves, we fix
$c <\nolinebreak \dim\FF$.  Let $\EE_\spot$ be a resolution of~$\FF$
by locally free sheaves, and write $\EE^\spot$ for the dual complex of
sheaves. Thus the cohomology of~$\EE^\spot$ is
$\Ext^\spot_{\PP^n}(\FF,\OO_{\PP^n})$.  The restriction $\EE^\spot|_H$
to~$H$ is the cokernel of the sheaf homomorphism $\EE^\spot(-1) \to
\EE^\spot$ induced by~$\theta$.  Its cohomology is
$\Ext^\spot_H(\FF|_H,\OO_H)$; this is by the genericity hypothesis
on~$\theta$, which also guarantees that the cohomology sheaves of
$\EE^\spot|_H$ are obtained by restricting those of~$\EE^\spot$
to~$H$.  The dimensions of the cohomology sheaves decrease by
precisely~$1$ (recall our negative dimension convention) by genericity
again and the Hauptidealsatz; geometrically: $H$~is transverse to
every component in the support of each $\Ext$ sheaf.  Consequently,
$\dim\Ext^\spot_{\PP^n}(\FF,\OO_{\PP^n}) < \dim\FF - c \iff
\dim\Ext^\spot_H(\FF|_H,\OO_H) < \dim\FF_H - c$, where the
``$\Leftarrow$'' direction requires the hypothesis $\dim\FF - c > 0$.
Now apply Proposition~\ref{p:regular} to
$\Ext^\spot_{\PP^n}(\FF,\OO_{\PP^n})$ and $\Ext^\spot_H(\FF|_H,\OO_H)$
to complete the proof.
\end{proof}

\begin{proposition}\label{p:sheafify}
Fix a graded $S$-module~$M\hspace{-.2pt}$ with associated sheaf $\FF$
on projective space~$\PP^{n-1}$.  If $c \leq \dim\FF$,
then $M$ is CM in codimension~$c$ if and only if $\FF$ is CM in
codimension~$c$.
\end{proposition}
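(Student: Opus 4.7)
The plan is to match primes $\pp$ of $S$ of Krull dimension $d - c$ in $\operatorname{supp}(M)$ (where $d = \dim M$) with points of $\PP^{n-1}$ of dimension $r - c$ in $\operatorname{supp}(\FF)$ (where $r = \dim \FF$), exploiting the hypothesis $c \leq \dim \FF$ to force $d - c \geq 1$ so that the irrelevant ideal $\mm$ never enters the picture. I would invoke two standard facts from graded commutative algebra (e.g., from \cite{BrHe}). First, for any homogeneous prime $\pp \neq \mm$, the localization $M_\pp$ and the stalk $\FF_\pp = M_{(\pp)}$ agree in Krull dimension and depth, so one is Cohen--Macaulay of dimension $k$ exactly when the other is. Second, for a non-homogeneous prime $\pp$, letting $\pp^* \subsetneq \pp$ denote the largest homogeneous ideal of $S$ contained in $\pp$ (which is automatically prime), one has $\dim M_\pp = \dim M_{\pp^*} + 1$ and $\operatorname{depth}(M_\pp) = \operatorname{depth}(M_{\pp^*}) + 1$, so $M_\pp$ is Cohen--Macaulay of dimension $k$ if and only if $M_{\pp^*}$ is Cohen--Macaulay of dimension $k - 1$.

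Given these tools, the forward direction is immediate: every codimension-$c$ point of $\FF$ corresponds to a homogeneous prime of $S$ of dimension $d - c$ in $\operatorname{supp}(M)$, and the first fact transports Cohen--Macaulayness from $M_\pp$ to $\FF_\pp$. For the reverse direction, I would fix a prime $\pp$ of dimension $d - c$ in $\operatorname{supp}(M)$ and split into two cases. If $\pp$ is homogeneous, it corresponds to a codimension-$c$ point of $\FF$ and the first fact again delivers the conclusion. If $\pp$ is non-homogeneous, the second fact reduces the problem to showing that $\FF_{\pp^*}$ is Cohen--Macaulay of dimension $c - 1$, where $\pp^*$ is a homogeneous prime with $\dim S/\pp^* = d - c + 1 \geq 2$; here the inequality $d - c + 1 \geq 2$ uses the standing assumption $c \leq \dim \FF$.

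The main subtlety---and the step I expect to be the primary obstacle---is that $\pp^*$ sits at codimension $c - 1$ in $\FF$, while the hypothesis only directly controls codimension $c$. To bridge this, I would note that $S/\pp^*$ is a graded domain of Krull dimension at least $2$, so the Hauptidealsatz applied to any nonzero homogeneous element of positive degree produces a height-$1$ homogeneous prime of $S/\pp^*$; pulling this back yields a homogeneous prime $\pp^{**} \supsetneq \pp^*$ of $S$ with $\dim S/\pp^{**} = d - c$, and $\pp^{**} \in \operatorname{supp}(\FF)$ because $\operatorname{supp}(\FF)$ is closed under specialization. By hypothesis, $\FF_{\pp^{**}}$ is Cohen--Macaulay of dimension $c$, and localizing further at the smaller prime $\pp^*$ preserves the Cohen--Macaulay property, delivering $\FF_{\pp^*}$ Cohen--Macaulay of dimension $c - 1$ as required. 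The ``room'' in $\operatorname{Proj}(S/\pp^*)$ needed for this specialization is precisely what the hypothesis $c \leq \dim \FF$ provides.
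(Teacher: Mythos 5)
Your proof is correct, but it takes a genuinely different route from the paper's. The paper's proof is a one-line reduction: it observes that $\Ext^i_{\PP^{n-1}}(\FF,\OO)$ is the sheaf associated to $\ext^i_S(M,S)$, and then invokes the Ext-dimension characterizations already established in Proposition~\ref{p:regular} and Corollary~\ref{c:regular} (that CM in codimension~$c$ is equivalent to $\dim\Ext^i < r - c$, resp.\ $\dim\ext^i < d - c$, in the relevant cohomological range). Since sheafification drops Krull dimension by exactly one when $d - c \geq 1$, the two numerical conditions match, and that is the entire argument. Your argument instead verifies Definition~\ref{d:module} and Definition~\ref{d:codim} point by point: Fact~1 (the flat local extension $S_{(\pp)} \to S_\pp$ with zero-dimensional regular fiber) handles homogeneous primes directly, and Fact~2 (the standard $\pp^*$-dictionary from \cite[Sections~1.5, 2.1]{BrHe}) reduces non-homogeneous primes to homogeneous ones at the cost of shifting codimension by~$1$. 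The key extra idea you supply---which has no analogue in the paper's proof---is the specialization step: producing $\pp^{**} \supsetneq \pp^*$ of the correct dimension via the Hauptidealsatz in the graded domain $S/\pp^*$, applying the hypothesis at $\pp^{**}$, and then generizing back to $\pp^*$ using the fact that a localization of a Cohen--Macaulay module is Cohen--Macaulay with the expected dimension drop (valid here since $S_{\pp^{**}}$ is catenary). This is precisely where your hypothesis $c \leq \dim\FF$ does its work, guaranteeing $\dim S/\pp^* \geq 2$ so there is room to specialize without hitting $\mm$. The trade-off: the paper's proof is shorter but leans on the Ext machinery it has already built; yours is self-contained and makes the geometric content (matching primes across the cone and the projective scheme) completely explicit, at the cost of the extra bookkeeping for non-homogeneous primes.
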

\begin{proof}
By functoriality, $\Ext^i_X(\FF,\OO_X)$ is the sheaf associated to
$\ext^i_S(M,S)$, where $X = \PP^{n-1}$, hence the the result is a
straightforward consequence of Proposition~\ref{p:regular} and
Corollary~\ref{c:regular}.
\end{proof}

The following algebraic version of Theorem~\ref{t:codim} is slightly
less clean for $c \approx \dim M$, owing to the possibility of the
maximal ideal~$\mm$ as an associated prime of $M$ and its $\ext$
modules.

\begin{theorem}\label{t:algebraic}
Fix a graded $S$-module $M$ and a linear form~$\theta$ not in any
associated prime $\neq \mm$ of\/ $\bigoplus_i\ext^i_S(M,S)$.
If $c < \dim M - 1$ then $M/\theta M$ is CM in codimension~$c$ if and
only if $M$~is.  If $M$ has finite local cohomology or is
Cohen--Macaulay then so is~$M/\theta M$, accordingly.
\end{theorem}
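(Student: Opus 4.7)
The plan is to reduce Theorem~\ref{t:algebraic} to the sheaf-theoretic Theorem~\ref{t:codim} by sheafifying everything to $\PP^{n-1}$ and translating back with Proposition~\ref{p:sheafify}. Let $\FF$ denote the coherent sheaf on $X = \PP^{n-1}$ associated to $M$; then $\dim\FF = \dim M - 1$, the sheaf associated to $M/\theta M$ is the restriction $\FF|_H = \FF/\theta\FF$ where $H = V(\theta) \subset X$, and $\bigoplus_i \Ext^i_X(\FF,\OO_X)$ is the sheafification of $\bigoplus_i \ext^i_S(M,S)$ by functoriality of resolutions. Since the associated points of the sheafification of a graded $S$-module are exactly its associated primes other than~$\mm$, the hypothesis on~$\theta$ translates precisely to the hypothesis of Theorem~\ref{t:codim}. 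A standard local-duality observation---any $\pp \in \ass M$ with $\pp \neq \mm$ lies in $\ass \ext^{\operatorname{ht}\pp}_S(M,S)$, since depth zero of $M_\pp$ forces the top $\ext$ to be nonzero with $\pp S_\pp$ as an associated prime via Matlis duality---then shows that the hypothesis forces $\theta$ to avoid every associated prime of~$M$ other than~$\mm$, so in particular $\dim(M/\theta M) = \dim M - 1$.

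For the first sentence, fix $c < \dim M - 1$; then $c < \dim\FF$ and $c \leq \dim(\FF|_H) = \dim M - 2$, so Proposition~\ref{p:sheafify} applies at codimension~$c$ to both $M$ (with associated sheaf~$\FF$) and $M/\theta M$ (with associated sheaf~$\FF|_H$). Concatenating the equivalences $M$ is CM in codimension~$c$ iff $\FF$ is CM in codimension~$c$ (by Proposition~\ref{p:sheafify}) iff $\FF|_H$ is CM in codimension~$c$ (by Theorem~\ref{t:codim}) iff $M/\theta M$ is CM in codimension~$c$ (by Proposition~\ref{p:sheafify} again) yields the first sentence.

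For finite local cohomology, Corollary~\ref{c:algebraic} together with Proposition~\ref{p:sheafify} at $c = \dim\FF$ identifies the property that $M$ has finite local cohomology with the property that $\FF$ is Cohen--Macaulay as a sheaf; the last sentence of Theorem~\ref{t:codim} yields $\FF|_H$ Cohen--Macaulay, and one more application of Proposition~\ref{p:sheafify} returns finite local cohomology for $M/\theta M$. The Cohen--Macaulay case is similar but cleaner: a Cohen--Macaulay~$M$ of positive dimension has $\mm \notin \ass M$, so the hypothesis makes $\theta$ an $M$-regular element, and the standard fact that a regular element on a Cohen--Macaulay module produces a Cohen--Macaulay quotient concludes. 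The main obstacle is the bookkeeping around $\mm$-primary summands: specifically, tracking that the $\ext$-theoretic hypothesis on~$\theta$ implies the associated-prime avoidance on~$M$ needed for $\dim(M/\theta M) = \dim M - 1$ and for $\theta$ to be $M$-regular in the Cohen--Macaulay case.
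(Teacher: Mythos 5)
Your proposal is correct and follows the same sheaf-theoretic route as the paper: sheafify to $\PP^{n-1}$, run the iff through Theorem~\ref{t:codim} on both ends using Proposition~\ref{p:sheafify}, and handle $c\geq\dim M-1$ separately.  The paper's own proof is terser; for $c<\dim M-1$ it simply cites Theorem~\ref{t:codim} and Proposition~\ref{p:sheafify}, and for $c\geq\dim M-1$ it observes via Corollary~\ref{c:algebraic} that the associated sheaf is Cohen--Macaulay and remains so after restriction, while cautioning that $M/\theta M$ ``might itself have (nonzero) finite local cohomology.''

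Where you genuinely diverge is in the last sentence.  The paper's argument, taken literally, only yields that $M/\theta M$ has finite local cohomology even when $M$ is Cohen--Macaulay, and it says nothing about why the hypothesis on $\theta$ (avoidance of associated primes of $\bigoplus_i\ext^i_S(M,S)$ away from $\mm$) controls associated primes of $M$ itself.  Your local-duality observation---that any $\pp\in\ass M$ with $\pp\neq\mm$ forces $\ext^{\operatorname{ht}\pp}_S(M,S)_\pp\neq 0$ of finite length over $S_\pp$, hence $\pp\in\ass\ext^{\operatorname{ht}\pp}_S(M,S)$---cleanly bridges this gap.  It shows $\theta$ is automatically $M$-regular when $M$ is Cohen--Macaulay of positive dimension, so the Cohen--Macaulay case falls to the elementary regular-element fact with no sheaf theory at all, and it also justifies $\dim(M/\theta M)=\dim M-1$ throughout.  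This is a real improvement in completeness over what the paper writes.

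One small caution: you identify ``$M$ has finite local cohomology'' with ``$\FF$ is Cohen--Macaulay.''  The precise statement coming out of Corollary~\ref{c:algebraic} and Proposition~\ref{p:sheafify} is that $M$ has finite local cohomology iff $\FF$ is CM in codimension $\dim\FF$, which by Proposition~\ref{p:regular} is the condition $\Ext^i_X(\FF,\OO_X)=0$ for $i>\dim X-\dim\FF$, i.e., Cohen--Macaulay \emph{and equidimensional}.  A Cohen--Macaulay sheaf with components of different dimensions is not CM in codimension $\dim\FF$.  The argument still works because the proof of Theorem~\ref{t:codim} actually tracks vanishing of the $\Ext$ sheaves, which is preserved under generic hyperplane section, but the phrasing ``$\FF$ is Cohen--Macaulay'' alone is slightly too weak to push back through Proposition~\ref{p:sheafify}.  (The paper's own proof shares this imprecision.)  Stating the condition as ``CM in codimension $\dim\FF$'' or ``$\Ext^i_X(\FF,\OO_X)=0$ for $i>\dim X-\dim\FF$'' would make the chain of equivalences airtight.
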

\begin{proof}
If $c < \dim M - 1$, then the result follows from
Theorem~\ref{t:codim} and Proposition~\ref{p:sheafify}.  If $c \geq
\dim M - 1$ and $M$ is CM in codimension~$c$, then 
(by Corollary~\ref{c:algebraic}) the sheaf
on~$\PP^{n-1}$ associated to~$M$ is Cohen--Macaulay, and so the same is
true of the sheaf associated to~$M/\theta M$ by Theorem~\ref{t:codim},
but $M/\theta M$ might itself have (nonzero) finite local cohomology.
\end{proof}

Finally, it remains to see that Theorem~\ref{t:algebraic} does indeed
directly generalize Theorem~\ref{main}.

\begin{proofof}{Theorem~\ref{main}}
Let $\Delta$ have dimension $r = d - 1$, so $\kk[\Delta]$ has
dimension $d$.  The quotient $\kk[\Delta]/(\theta_1,\ldots,\theta_m)$
modulo a sequence of~$m$ generic linear forms has finite local
cohomology if and only if it is CM in codimension $(d - m) - 1 = r -
m$, by Corollary~\ref{c:algebraic}.  This occurs if and only if
$\kk[\Delta]$ itself is CM in codimension $r - m$, by
Theorem~\ref{t:algebraic}.  The desired result follows from the
characterization of singularity dimension in Theorem~\ref{t:singdim}.
\end{proofof}

\begin{remark}\label{r:general}
The first result in the general direction of Theorems~\ref{t:codim}
and~\ref{t:algebraic} was proved by Flenner for subschemes of spectra
of local rings \cite[Satz~3.3]{flenner}.
It is a special case of Spreafico's very general Bertini-type theorem
for local geometric conditions satisfying certain natural axioms
\cite[Corollary~4.3]{spreafico}; for the purposes here, the reader can
check that CM in codimension~$c$ is a valid choice for Spreafico's
``property $\PP$''.
However, Flenner and Spreafico proved their results for subschemes,
not coherent sheaves, and the ``if'' direction of
Theorems~\ref{t:codim} and~\ref{t:algebraic} does not fall under
Spreafico's framework: one cannot conclude that a variety is (for
instance) smooth by knowing that a generic hyperplane section is
smooth.  In contrast, the constrained version of the Cohen--Macaulay
condition furnished by Definition~\ref{d:codim} lifts from hyperplane
sections because a hyperplane in projective space intersects every
subvariety of positive dimension and---generically, at least---reduces
its dimension by~$1$, the operative subvariety being the locus where a
sheaf fails to be CM in codimension~$c$.  The failure of hyperplanes
to meet subschemes of dimension~$0$ accounts for the failure of the
``if'' direction when the codimension~$c$ is large.
\end{remark}

\section{Squarefree modules modulo systems of parameters}\label{sqfree}

In Section~\ref{proof}, we derived the simplicial Theorem~\ref{main}
from the simplicial calculations of Hilbert series of local cohomology
modules that constitute Theorem~\ref{isomorphism}.  Having taken the
former to its natural generality in Sections~\ref{codim}
and~\ref{modules}, we now generalize the latter in
Theorem~\ref{t:sqfree}, where $H^{\ell+m}_F(M) =
H^{\ell+m-1}\local{\Delta}{F}$ when $M = \kk[\Delta]$ by
Theorem~\ref{Grabe}.  It~is possible to imagine that generalizations
to arbitrary $\NN^n$-graded modules could exist, but we limit
ourselves here to the class of squarefree modules, introduced by
Yanagawa \cite{yanagawa}.
\begin{defn}
A $\ZZ^n$-graded module over $S = \kk[x_1,\ldots,x_n]$ is
\emph{squarefree} if its generators and relations all lie in
\emph{squarefree degrees}, meaning those in $\{0,1\}^n
\subset\nolinebreak \ZZ^n$.
\end{defn}
The point of squarefree modules is that, like face
rings~$\kk[\Delta]$, all of the information can be recovered easily
from the finite-dimensional vector subspace in squarefree degrees.

One of the fundamental conclusions of Section~\ref{proof} is that the
entirety of the interesting combinatorial information encoded by the
local cohomology of~$\kk[\Delta]$ is distilled into the finite length
modules in $\ZZ$-graded degrees zero and above that remain after
applying Theorem~\ref{isomorphism} enough times.  Other types of
reductions to vector spaces of finite-dimension are known for the
local cohomology of~$\kk[\Delta]$, and indeed of any squarefree
module~$M$: one way to phrase it is that the $\ZZ^n$-graded Matlis
dual $H^i_\mm(M)^\vee = \ext^{n-i}_S(M,\omega_S)$ is a squarefree
module \cite[Theorem~2.6]{yanagawa}, where the \emph{canonical module}
$\omega_S$ is the rank~$1$ free module generated in $\ZZ^n$-graded
degree $(1,\ldots,1)$.  (An analogous statement holds for the local
cohomology of arbitrary $\NN^n$-graded modules: the \v Cech hull
\cite[Definition~2.7]{localDuality} recovers the local cohomology from
an a~priori bounded set of degrees; see the proof of
\cite[Lemma~3.10]{cmMonomial}.)

In fact, the squarefreeness of these $\ext$ modules is the key to
Theorem~\ref{t:sqfree}, via the general enumerative statement about
arbitrary squarefree modules in Proposition~\ref{p:sqfree}.  First, we
need a lemma, whose proof relies on a key feature (the squarefree
filtration) of squarefree modules.

\begin{lemma}\label{l:tor}
If $M$ is a squarefree $S$-module and $R = S/\Theta S$ for a sequence
$\Theta = \theta_1,\ldots,\theta_m$ of generic linear forms, then
$\tor_t^S(M,R)_i = 0$ whenever $t > 0$ and $i > m$.
\end{lemma}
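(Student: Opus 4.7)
The plan is to compute $\tor^S_t(M,R)$ via the Koszul resolution of $R$, reduce to shifted face rings using the squarefree filtration of $M$, and verify the vanishing on these building blocks by an explicit Koszul calculation. Since the generic linear forms $\Theta=\theta_1,\ldots,\theta_m$ form a regular sequence on $S$, the Koszul complex $K_\bullet(\Theta;S)$ freely resolves $R$, so $\tor^S_t(M,R)\cong H_t(K_\bullet(\Theta;M))$. The key structural feature of squarefree modules---the squarefree filtration, to which the lemma statement alludes---gives $M$ a finite filtration $0=M_0\subset M_1\subset\cdots\subset M_\ell=M$ by squarefree submodules whose consecutive quotients are shifted face rings $\kk[F_i](-\sigma_i)$, with $F_i\subseteq[n]$ and $\sigma_i\in\{0,1\}^n$ subject to $s(\sigma_i)\subseteq F_i$ (a constraint forced by squarefreeness of the quotient; one checks directly that $\kk[F](-\sigma)$ is squarefree if and only if $s(\sigma)\subseteq F$, so in particular $|\sigma_i|\leq|F_i|$). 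The long exact sequence in $\tor^S_*(-,R)$ then reduces the lemma to the vanishing $\tor^S_t(\kk[F](-\sigma),R)_i=0$ for $t>0$ and $i>m$ on each such quotient.

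For such a quotient, write $k=|F|$ and reduce further to computing $\tor^S_t(\kk[F],R)_{i-|\sigma|}$ via $K_\bullet(\bar\Theta;\kk[F])$, where $\bar\theta_p\in\kk[F]_1$ is the image of $\theta_p$. If $k\geq m$, genericity ensures that $\bar\Theta$ is a regular sequence on $\kk[F]$, and the Koszul homology vanishes in positive homological degrees. If $k<m$, the $m$ forms $\bar\Theta$ must span a subspace of the $k$-dimensional vector space $\kk[F]_1$; by a $\kk$-linear change of basis in the $m$-tuple $\Theta$ (taking $\bar\theta_1,\ldots,\bar\theta_k$ as a basis, which holds by genericity), the Koszul complex is identified with $K_\bullet(\bar\theta_1,\ldots,\bar\theta_k,0,\ldots,0;\kk[F])$. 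A K\"unneth-type argument then identifies the homology as $H_t\cong\bigwedge^t\kk^{m-k}$ concentrated in internal degree exactly $t$: the first $k$-term subcomplex resolves $\kk$, while the zero-differential Koszul complex on the remaining $m-k$ zero forms contributes a rank-$\binom{m-k}{t}$ piece generated in homological (and internal) degree $t$. Thus $\tor^S_t(\kk[F],R)_j$ is nonzero only for $j=t\leq m-k$, so after the shift by $|\sigma|\leq k$ one obtains $\tor^S_t(\kk[F](-\sigma),R)_i\neq 0$ only when $i=t+|\sigma|\leq(m-k)+k=m$, yielding the required vanishing for $i>m$.

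The principal obstacle is establishing the squarefree filtration with the correct shift control. The constraint $s(\sigma_i)\subseteq F_i$---which yields the critical inequality $|\sigma_i|\leq|F_i|$---is exactly what makes the bound $i\leq m$ close uniformly across all building blocks; without it, the $k<m$ case would allow nonvanishing Tor at degrees $i=t+|\sigma|$ exceeding $m$. This structural fact about squarefree modules can be built inductively by peeling off submodules of $M$ generated by carefully chosen minimal squarefree-degree pieces, each isomorphic to a shifted face ring of the required form, and once this is in hand the rest of the argument reduces to standard Koszul manipulations.
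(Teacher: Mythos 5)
Your proof is correct and takes essentially the same route as the paper: reduce via Yanagawa's squarefree filtration to the building blocks $\kk F = \kk[F](-F)$ (your extra generality $s(\sigma)\subseteq F$ is harmless but unnecessary, since Yanagawa's Proposition~2.5 already gives $\sigma=F$), then split into the cases $|F|\geq m$ (regular sequence, Tor vanishes) and $|F|<m$, where your K\"unneth computation on the Koszul complex and the paper's base change to $S'=S/(\theta_1,\ldots,\theta_{|F|})$ produce the same answer: $\binom{m-|F|}{t}$ copies of $\kk$ concentrated in $\ZZ$-degree $|F|+t\leq m$. One small correction to your sketch of the filtration: one peels off the submodule generated by $M_F$ for a \emph{maximal} squarefree degree $F$ with $M_F\neq 0$, not a minimal one---the submodule generated by a minimal-degree piece is typically not a shifted face ring (try $M=\kk[x_1,x_2,x_3]/(x_1x_3)$, where $M_\nothing$ generates all of $M$).
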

\begin{proof}
For each squarefree vector~$F$, set $\xx^F = \prod_{j \in F} x_j$ and
write $\kk F$ for the $\ZZ^n$-graded $S$-module $\xx^F\kk[x_j : j \in
F] = \xx^F S/(x_j : j \notin F)$ generated in degree~$F$.  As a
standard $\ZZ$-graded module, $\kk F$ is a polynomial ring in $|F|$
many variables generated in degree~$|F|$.

The squarefree module~$M$ has a filtration
\begin{equation}\label{e:M}
  0 = M_0 \subset M_1 \subset \cdots \subset M_k = M
\end{equation}
such that each quotient $M_j/M_{j-1}$ is isomorphic, as a
$\ZZ^n$-graded module, to $\kk F$ for some squarefree vector~$F$
\cite[Proposition~2.5]{yanagawa}.  If the filtration has length~$1$,
and so $M = \kk F$, then the module $\tor_t^S(\kk F,R)$ vanishes for
$t > 0$ whenever $|F| \geq m$. This is because $\Theta$ is a regular
sequence on~$\kk F$ in that case.  When $|F| < m$, on the other hand,
$\tor_t^S(\kk F,R) \cong \tor_t^{S'}\big(\kk(-|F|),R\big)$, where $S'
=S/(\theta_1,\ldots,\theta_{|F|})$ and $\kk(-|F|) =\kk F\otimes_S S'$.
Thus the $\tor$ in question is a direct sum of $\binom{m - |F|}{t}$
many copies of the residue field~$\kk$ in $\ZZ$-graded degree $|F| +
t$.  The desired result holds for $M = \kk F$ since $\binom{m -
|F|}{t} = 0$ as soon as $t > m - |F|$.

The general case follows by induction on the length of the
filtration~\eqref{e:M} by tensoring the exact sequence $0 \to M_{j-1}
\to M_j \to M_j/M_{j-1} \to 0$ with the Koszul complex on~$\Theta$.
\end{proof}

\begin{proposition}\label{p:sqfree}
Fix a squarefree module $E$ and a sequence $\Theta =
\theta_1,\ldots,\theta_m$ of generic linear forms.  For all $i \geq
0$, the vector space dimension of the $i$-th standard $\ZZ$-graded
piece of~$E$ is
$$%
  \dim_\kk E_i
  =
  \sum_{F\in\{0,1\}^n} \binom{i - 1}{|F| - 1} \cdot \dim_\kk E_F,
$$%
where $E_F$ is the piece of~$E$ in $\ZZ^n$-graded degree~$F$.  In
contrast, for all $i > m$,
$$%
  \dim_\kk(E/\Theta E)_i
  =
  \sum_{F\in\{0,1\}^n}\binom{i-m-1}{|F|-m-1}\cdot\dim_\kk E_F.
$$
\end{proposition}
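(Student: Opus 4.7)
The plan is to attack the two displayed identities in Proposition~\ref{p:sqfree} independently, then link them via a binomial identity coming from the Koszul complex.

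For the first identity, I would invoke the squarefree filtration \eqref{e:M} used in the proof of Lemma~\ref{l:tor}: there is a chain $0 = M_0 \subset \cdots \subset M_k = E$ whose successive quotients are isomorphic, as $\ZZ^n$-graded modules, to $\kk F$ for squarefree vectors~$F$. Since $\kk F = \xx^F \kk[x_j : j \in F]$ is, as a $\ZZ$-graded module, a polynomial ring in $|F|$ variables generated in degree~$|F|$, its $i$-th graded piece has dimension equal to the number of monomials of degree $i-|F|$ in $|F|$ variables, namely $\binom{i-1}{|F|-1}$. Additivity of Hilbert functions over short exact sequences then gives
$$
\dim_\kk E_i \;=\; \sum_{F \in \{0,1\}^n} c_F \cdot \binom{i-1}{|F|-1},
$$
where $c_F$ is the number of steps in the filtration producing a copy of $\kk F$. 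Specializing to $i = |F|$ (where $\binom{|F|-1}{|G|-1}$ vanishes unless $|G|=|F|$, and the $\ZZ^n$-grading pins down the squarefree degree exactly) identifies $c_F = \dim_\kk E_F$.

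For the second identity, I would use the Koszul complex $K_\spot$ on $\Theta$, which resolves $R = S/\Theta S$ by free modules with $K_t = \wedge^t S(-1)^m$. Tensoring with~$E$ gives a complex whose homology is $\tor^S_\spot(E,R)$. By Lemma~\ref{l:tor}, this homology vanishes in positive homological degree in each $\ZZ$-graded degree $i > m$. Therefore, in such degrees, the Euler characteristic computes $\dim_\kk(E/\Theta E)_i$:
$$
\dim_\kk(E/\Theta E)_i \;=\; \sum_{t=0}^{m} (-1)^t \binom{m}{t} \dim_\kk E_{i-t}.
$$
Substituting the first identity and swapping the order of summation yields
$$
\dim_\kk(E/\Theta E)_i \;=\; \sum_{F \in \{0,1\}^n} \dim_\kk E_F \cdot \sum_{t=0}^{m} (-1)^t \binom{m}{t} \binom{i-t-1}{|F|-1}.
$$

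The final step is the binomial identity
$$
\sum_{t=0}^{m} (-1)^t \binom{m}{t} \binom{i-t-1}{|F|-1} \;=\; \binom{i-m-1}{|F|-m-1},
$$
which follows by $m$-fold iteration of Pascal's rule $\binom{n-1}{k-1} = \binom{n}{k} - \binom{n-1}{k}$ (equivalently, by recognizing the left-hand side as the $m$-th finite difference of the polynomial $n \mapsto \binom{n-1}{|F|-1}$). No step is seriously difficult; the only subtlety I anticipate is making sure the Euler-characteristic step is valid, which hinges precisely on the vanishing range $i > m$ guaranteed by Lemma~\ref{l:tor}—that is the real content of the proposition, and the bookkeeping afterward is purely combinatorial.
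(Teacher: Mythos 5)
Your proposal is correct, but it takes a genuinely different route for the second identity. The paper's proof never leaves the world of short exact sequences: it tensors each step $0 \to E_{j-1} \to E_j \to E_j/E_{j-1} \to 0$ of the squarefree filtration with $S/\Theta$, uses only the $t=1$ case of Lemma~\ref{l:tor} to conclude that these sequences stay exact in $\ZZ$-graded degrees $>m$, and thereby reduces to computing the Hilbert function of $\gr E/\Theta\gr E$, which is a direct sum of $\kk F/\Theta\kk F$. Your argument instead tensors $E$ with the whole Koszul complex $K_\spot(\Theta)$, invokes the full strength of Lemma~\ref{l:tor} (vanishing of $\tor_t$ for every $t>0$ in degrees $>m$) to replace the alternating sum of homology by the alternating sum of chain groups, and then finishes with the $m$-fold finite-difference identity $\sum_{t}(-1)^t\binom{m}{t}\binom{i-t-1}{|F|-1}=\binom{i-m-1}{|F|-m-1}$. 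Both are valid; the paper's version is more economical in its use of the lemma (the remark after the conjecture in Section~\ref{sqfree} explicitly notes that only $t=1$ is needed there, and reserves the full strength for a possible sharpening of Proposition~\ref{p:ext}), while yours is more mechanical once Lemma~\ref{l:tor} is in hand, replacing the passage to the associated graded module by a purely numerical Euler-characteristic computation. One small point worth making explicit in your write-up: when you substitute the first identity into $\dim_\kk E_{i-t}$, you need $i-t\ge 0$; this holds because $t\le m<i$, but since the first identity is only asserted for $i\ge 0$ it deserves a word.
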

\begin{proof}
The formula for $\dim_\kk E_i$ simply expresses the fact that $E$ and
its associated graded module for the squarefree filtration~\eqref{e:M}
have the same Hilbert function, which is the sum of the Hilbert
functions of the associated graded modules~$\kk F$; the binomial
coefficient counts the monomials of degree $i - |F|$ in (at most)
$|F|$ many variables.

The binomial coefficient in the formula for $\dim_\kk(E/\Theta E)_i$
counts the monomials of degree $i - |F|$ in $|F| - m$ variables, which
is the dimension of the $i$-th $\ZZ$-graded piece of $\kk F/\Theta \kk
F$.  It therefore suffices to show that the Hilbert functions of
$E/\Theta E$ and $\gr E/\Theta\gr E$ agree in degrees $> m$.  To see
this, tensor the short exact sequence $0 \to E_{j-1} \to E_j \to
E_j/E_{j-1} \to 0$ with $S/\Theta$ and note that it remains exact in
$\ZZ$-graded degrees $> m$ by Lemma~\ref{l:tor}.
\end{proof}

For the next result, assume that a system $\Theta =
\theta_1,\ldots,\theta_m$ of generic linear forms has been fixed, and
set $R = S/\Theta S$.  The standard $\ZZ$-graded canonical module
of~$S$ is $\omega_S = S(-n)$, whereas the $\ZZ$-graded canonical
module of~$R$ is $\omega_R = R(-(n-m)) = R \otimes_S \omega_S(m)$.

\begin{proposition}\label{p:ext}
For any standard $\ZZ$-graded $S$-module~$M$, the sheaves on
projective space determined by $R \otimes_S
\ext^\ell_S(M,\omega_S)(m)$ and\/ $\ext^\ell_R(R \otimes_S
M,\omega_R)$ are isomorphic; that is, the modules themselves are
isomorphic up to a kernel and cokernel supported at the maximal~ideal.
\end{proposition}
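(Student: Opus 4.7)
The plan is to realize both modules as the $\ell$th cohomology of one common complex of $R$-modules, up to corrections supported at~$\mm$, which vanish on $\PP^{n-1}$. Pick a finite $S$-free resolution $F_\bullet\to M$ (which exists since $S$ is regular) and apply $\Hom_S(-,\omega_S)$ to obtain a bounded cochain complex $G^\bullet$ of free $S$-modules with $H^\ell(G^\bullet)=\ext^\ell_S(M,\omega_S)$. A routine calculation using $\omega_R=R(m-n)$ and $\omega_S=S(-n)$ produces a canonical isomorphism of $R$-module complexes
\[
R\otimes_S G^\bullet(m) \;\cong\; \Hom_R\bigl(R\otimes_S F_\bullet,\,\omega_R\bigr).
\]

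Two spectral sequences then compute the $\ell$th cohomology of this common complex. Since each $R\otimes_S F_i$ is $R$-free, $\Hom_R(R\otimes_S F_\bullet,\omega_R)$ computes the hyperext $\ext^\ell_R(R\otimes^L_S M,\omega_R)$, and is governed by a Grothendieck-type spectral sequence $E_2^{p,q}=\ext^p_R\bigl(\tor^S_q(R,M),\omega_R\bigr)$ whose $q=0$ row is $\ext^\ell_R(R\otimes_S M,\omega_R)$ and whose rows $q\geq 1$ measure the deviation. Dually, since $G^\bullet$ is a complex of free $S$-modules, $R\otimes_S G^\bullet(m)$ already represents $R\otimes^L_S G^\bullet(m)$, so its cohomology is governed by a universal-coefficient spectral sequence $E_2^{p,q}=\tor^S_q\bigl(R,\ext^p_S(M,\omega_S)\bigr)(m)$ whose $q=0$ row is $R\otimes_S\ext^\ell_S(M,\omega_S)(m)$.

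The decisive input is that for a sufficiently generic $\Theta$, every $\tor^S_q(R,N)$ with $q>0$ is annihilated by a power of~$\mm$, where $N$ ranges over the finite list $\{M\}\cup\{\ext^p_S(M,\omega_S):0\leq p\leq n\}$. This is an instance of filter-regularity: a generic sequence of linear forms is filter-regular on any fixed finitely generated graded $S$-module, and on such $N$ the Koszul complex $K_\bullet(\Theta)\otimes_S N$, which computes $\tor^S_\ast(R,N)$, has higher homology annihilated by a power of~$\mm$. Therefore every $q\geq 1$ entry of both spectral sequences sheafifies to zero on $\PP^{n-1}$; the sheafified spectral sequences degenerate to their $q=0$ rows, and identifying those rows yields the asserted isomorphism of sheaves.

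The main obstacle I anticipate is the genericity bookkeeping: arranging one $\Theta$ to be filter-regular on $M$ and on each of its (finitely many) nonzero $\ext^p_S(M,\omega_S)$ simultaneously. Since each filter-regularity condition is Zariski-open on the affine space of $m$-tuples of linear forms and the list is finite, a common sufficiently generic $\Theta$ exists; with that in hand, the rest reduces to routine manipulation of a pair of bounded spectral sequences.
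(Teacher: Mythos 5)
Your proof is correct, and it captures the same core idea as the paper's — reduce both sides to the cohomology of one complex built from a free resolution, and use genericity to control the discrepancy — but it packages the argument differently. The paper, by pointing back to its proof of Theorem~\ref{t:codim}, reduces $m$ linear forms to an iterated single-hyperplane argument: dualize a (locally) free resolution, tensor with $S/\theta$ for one generic $\theta$ at a time, and use the long exact sequence of cohomology (sheaves) of $0 \to \EE^\spot(-1) \to \EE^\spot \to \EE^\spot|_H \to 0$ together with the hypothesis that $\theta$ avoids the associated points of $\bigoplus_i \Ext^i(\FF,\OO)$ to identify the cohomology of the restricted complex with the restricted cohomology. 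You instead handle all $m$ forms at once by viewing the same complex $R\otimes_S G^\bullet(m)\cong\Hom_R(R\otimes_S F_\bullet,\omega_R)$ through a pair of hypercohomology spectral sequences, one with $E_2$-page $\tor^S_q(R,\ext^p_S(M,\omega_S))$ and the other with $E_2$-page $\ext^p_R(\tor^S_q(R,M),\omega_R)$, and you invoke filter-regularity of a common generic $\Theta$ (on $M$ and on each nonzero $\ext^p_S(M,\omega_S)$) to conclude that every $q\geq 1$ entry is $\mm$-power torsion, so both spectral sequences sheafify to their bottom rows. The filter-regularity condition here is exactly the module-by-module translation of the paper's associated-prime-avoidance hypothesis, and the identification of your common complex is the same $\Hom$--tensor adjunction for free modules that underlies the paper's dualized-resolution step. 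What your version buys is the avoidance of the $m$-fold induction and a cleaner accounting of where the finite-length error terms live (as subquotients of the off-bottom $E_2$ entries); what the paper's version buys is brevity given that the single-hyperplane machinery is already set up for Theorem~\ref{t:codim}. One small point worth stating explicitly in your write-up: for the hyperext spectral sequence you also need $\ext^p_R(\tor^S_q(R,M),\omega_R)$, not just $\tor^S_q(R,M)$, to be $\mm$-torsion for $q\geq 1$; this follows because $\ext^p_R(N,-)$ is supported on the support of $N$, but it is a step beyond the raw $\tor$ statement you highlight.
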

\begin{proof}
Reasoning as in the proofs of Theorem~\ref{t:codim} and
Proposition~\ref{p:sheafify} demonstrates that $R \otimes_S
\ext^\ell_S(M,S)$ and $\ext^\ell_R(R \otimes_S M,R)$ determine
isomorphic sheaves.  Beyond that, all that remains is fiddling with
the $\ZZ$-graded degree shifts.
\end{proof}

\begin{theorem}\label{t:sqfree}
Fix a squarefree $S$-module $M$ and a sequence $\Theta =
\theta_1,\ldots,\theta_m$ of generic linear forms.  If $H^j_F(M) :=
H^j_\mm(M)_{-F}$ is the $\ZZ^n$-graded piece of the $j$-th local
cohomology of~$M$ in the negative of the squarefree degree~$F$, then
for all $i \gg 0$,
$$%
  \dim_\kk H^\ell_\mm(M/\Theta M)_{-i}
  =
  \sum_F\binom{i-1}{|F|-m-1}\cdot\dim_\kk H^{\ell+m}_F(M).
$$
\end{theorem}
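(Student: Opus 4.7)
The plan is to leverage local duality on both $S$ and $R = S/\Theta S$ to translate local cohomology into Ext modules, use Yanagawa's theorem that the relevant Ext into $\omega_S$ is squarefree, and finally apply Propositions~\ref{p:sqfree} and~\ref{p:ext} to compute dimensions. The two identifications in the theorem concern (i)~a module at the bottom ($M$, with its $\ZZ^n$-graded local cohomology $H^{\ell+m}_F(M)$) and (ii)~a module at the top ($M/\Theta M$, whose standard $\ZZ$-graded local cohomology we want). Each side is linked to an Ext module by Matlis duality, and the two Ext modules are linked to each other by Proposition~\ref{p:ext}.

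First I will set $E = \ext^{n-m-\ell}_S(M, \omega_S)$. By Yanagawa's theorem \cite[Theorem~2.6]{yanagawa}, $E$ is squarefree. Local duality on $S$ (applied in $\ZZ^n$-graded form, $\omega_S = S(-\mathbf{1})$) gives a canonical identification $E_F^* \cong H^{\ell+m}_\mm(M)_{-F} = H^{\ell+m}_F(M)$, so $\dim_\kk E_F = \dim_\kk H^{\ell+m}_F(M)$ for every squarefree~$F$. This handles the right-hand side of the target formula once we can compute $\dim_\kk(E/\Theta E)_j$ in the appropriate $\ZZ$-graded degree: Proposition~\ref{p:sqfree} applied to the squarefree module~$E$ gives, for $j > m$,
$$
  \dim_\kk (E/\Theta E)_j
  = \sum_F \binom{j-m-1}{|F|-m-1}\cdot \dim_\kk E_F.
$$

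Next I will handle the left-hand side by standard $\ZZ$-graded local duality on the polynomial ring $R$ (which is a polynomial ring in $n-m$ variables because the $\theta_p$ are generic), yielding
$$
  \dim_\kk H^\ell_\mm(M/\Theta M)_{-i}
  = \dim_\kk \ext^{n-m-\ell}_R(M/\Theta M,\omega_R)_i.
$$
Now Proposition~\ref{p:ext}, with its $\ell$ replaced by $n-m-\ell$, asserts that $\ext^{n-m-\ell}_R(M/\Theta M,\omega_R)$ and $(R\otimes_S E)(m) = (E/\Theta E)(m)$ differ only by a submodule and quotient of finite length. Consequently, in all sufficiently large $\ZZ$-graded degrees $i$ (specifically, beyond the socle degrees of those finite-length pieces), the two modules have the same Hilbert function, giving
$$
  \dim_\kk \ext^{n-m-\ell}_R(M/\Theta M,\omega_R)_i
  = \dim_\kk (E/\Theta E)_{i+m}
$$
for $i\gg 0$. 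Plugging $j = i + m$ into the formula from Proposition~\ref{p:sqfree} yields the binomial coefficient $\binom{i-1}{|F|-m-1}$, and combining the three identifications finishes the theorem.

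The main obstacle is bookkeeping of the $\ZZ$-graded shifts---in particular, carefully matching $\omega_R$ with $R\otimes_S\omega_S(m) = R(m-n)$, tracking the twist by~$(m)$ introduced through Proposition~\ref{p:ext}, and converting between $\ZZ^n$-graded and standard $\ZZ$-graded Matlis duality---so that the degree $i+m$ in $E/\Theta E$ matches the degree $-i$ in $H^\ell_\mm(M/\Theta M)$. Once these shifts are correctly aligned, the argument reduces to assembling the three results already established (Yanagawa's squarefreeness, Proposition~\ref{p:sqfree}, and Proposition~\ref{p:ext}) together with local duality, with no further combinatorial work needed.
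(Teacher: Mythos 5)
Your proof is correct and takes the same route as the paper: $\ZZ^n$-graded local duality on $S$ to identify $H^{\ell+m}_F(M)$ with the graded pieces of the squarefree module $E=\ext^{n-m-\ell}_S(M,\omega_S)$, Proposition~\ref{p:sqfree} to compute $\dim_\kk(E/\Theta E)_{i+m}$, standard $\ZZ$-graded local duality on $R$ for the left-hand side, and Proposition~\ref{p:ext} to match the two Ext modules up to finite length in large degrees. Your tracking of the twist by $(m)$ and the substitution $j=i+m$ exactly reproduce the paper's calculation.
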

\begin{proof}
Using $\omega_S = x_1\cdots x_n S = S(-1,\ldots,-1)$ and an asterisk
for vector space duals, $H^{\ell+m}_\mm(M)_{-\alpha} \cong
\ext^{n-m-\ell}_S(M,\omega_S)_\alpha^*$ for all $\alpha \in \ZZ^n$ by
$\ZZ^n$-graded local duality \cite[Corollary~6.1]{localDuality}.  Thus
$H^j_F(M) \cong \ext^{n-j}_S(M,\omega_S)_F^*$ for all cohomological
degrees~$j$ and all squarefree vectors~$F$, and so
Proposition~\ref{p:sqfree} implies that the right-hand side of the
desired formula is the dimension of the $\ZZ$-graded piece
$R \otimes\ext^{n-m-\ell}_S(M,\omega_S)_{i+m}$.  The result follows from
Proposition~\ref{p:ext}, given that $H^\ell_\mm(M/\Theta M)_{-i} \cong
\ext^{n-m-\ell}_R(R \otimes M,\omega_R)_i^*$ by $\ZZ$-graded local
duality \cite[Theorem~3.6.19]{BrHe}.
\end{proof}

The special case of Theorem~\ref{t:sqfree} in which $M = \kk[\Delta]$
is slightly weaker than Theorem~\ref{isomorphism}, because of the
condition $i \gg 0$, but we believe this to be an artifact of the
proof.

\begin{conjecture}
The formula for $\dim_\kk H^\ell_\mm(M/\Theta M)_{-i}$ in
Theorem~\ref{t:sqfree} holds for all $i > 0$.
\end{conjecture}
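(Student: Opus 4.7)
The plan is to mirror the proof of Theorem~\ref{isomorphism} directly for squarefree modules, replacing Gr\"abe's Theorem~\ref{Grabe} with a squarefree analog and then running the same kernel-of-multiplication induction.

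First, I would record a squarefree analog of Theorem~\ref{Grabe}: for a squarefree $S$-module~$M$, the $\ZZ^n$-graded module $H^\ell_\mm(M)$ is supported in degrees $U \in \ZZ^n$ with $-U \in \NN^n$; for such~$U$ the piece $H^\ell_\mm(M)_U$ depends only on~$s(U)$, with $H^\ell_\mm(M)_U \cong H^\ell_F(M)$ where $F = s(U)$; and multiplication by~$x_\ell$ acts as the zero map if $\ell\notin s(U)$, as the identity if $\ell\in s(U)$ with $s(U+e_\ell) = s(U)$, and otherwise as a canonical corestriction map $H^\ell_F(M) \to H^\ell_{F\setminus\{\ell\}}(M)$. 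This structure is implicit in Yanagawa's result that the Matlis dual $\ext^{n-\ell}_S(M,\omega_S)$ is squarefree \cite[Theorem~2.6]{yanagawa}; alternatively, one can assemble it by induction along the squarefree filtration~\eqref{e:M} using long exact sequences in local cohomology, since each subquotient~$\kk F$ is a shift of the face ring of a simplex to which Theorem~\ref{Grabe} applies verbatim.

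With that structure in place, define $\ker^\ell_{m,i} \subset H^\ell_\mm(M)$ exactly as in Section~\ref{proof}, using the kernels of the generic maps~$\cdot\theta_p$.  The proofs of Lemma~\ref{ineq} and Lemma~\ref{equality} are purely combinatorial given the Gr\"abe-type description above: they use only the explicit action of each~$x_\ell$ and the nonsingularity of $m\times m$ submatrices of the coefficient matrix~$A=(a_{t,p})$. Transcribing those arguments yields
$$
  \dim_\kk \ker^\ell_{m,i} = \sum_{F\in\{0,1\}^n} \binom{i-m}{|F|-m-1}\cdot \dim_\kk H^{\ell}_F(M)
$$
together with surjectivity of $\cdot\theta_{m+1}:\ker^\ell_{m,i}\to\ker^\ell_{m,i-1}$ for $i\geq m+1$.

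Finally, I would run the induction on~$m$ of Theorem~\ref{isomorphism}. Prime avoidance---applied to the finitely many associated primes of $M/(\theta_1,\ldots,\theta_m)M$---allows the choice of $\theta_{m+1}$ to be a nonzerodivisor on the quotient modulo its $H^0_\mm$ submodule. The resulting long exact sequence of local cohomology, restricted to degrees~$-i$ with~$i>0$ and combined with the surjectivity above, produces an isomorphism
$$
  H^\ell_\mm(M/\Theta M)_{-i} \cong \ker^{\ell+m}_{m,\,i+m-1} \qquad (\ell,i>0),
$$
whence the dimension formula of Theorem~\ref{t:sqfree} holds for every~$i>0$.

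The main obstacle is the first step: extracting from Yanagawa's framework a sufficiently detailed $\ZZ^n$-graded structure on $H^\ell_\mm(M)$, including the precise identification (identity vs.\ corestriction) of the $x_\ell$-action needed to run the kernel-of-multiplication argument. Once that structural statement is in hand, the remaining steps are direct transcriptions of the arguments in Section~\ref{proof}.
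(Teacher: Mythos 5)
The statement you are addressing is not proved in the paper at all: it is stated as an open conjecture, and the authors only sketch a possible reduction, namely to a sharpened form of Proposition~\ref{p:ext} (an isomorphism of $R\otimes_S\ext^\ell_S(M,\omega_S)(m)$ with $\ext^\ell_R(R\otimes_S M,\omega_R)$ in all positive $\ZZ$-graded degrees), which they suggest should follow from the full strength of Lemma~\ref{l:tor} plus a Koszul double-complex comparison. Your route is genuinely different: you bypass the duality comparison and instead rerun the argument of Section~\ref{proof} for an arbitrary squarefree module. The structural input you need is in fact available in the precise form you want: by $\ZZ^n$-graded local duality \cite[Corollary~6.1]{localDuality} and Yanagawa's theorem that $\ext^{n-j}_S(M,\omega_S)$ is squarefree \cite[Theorem~2.6]{yanagawa}, the module $H^j_\mm(M)$ is the $\ZZ^n$-graded Matlis dual of a squarefree module, and unwinding Matlis duality yields exactly the Gr\"abe-type description you postulate: support in $-\NN^n$, pieces depending only on $s(U)$ under the canonical identifications, and the zero/identity/corestriction trichotomy for the $x_\ell$-action. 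Granting this, the proofs of Lemmas~\ref{ineq} and~\ref{equality} and of Theorem~\ref{isomorphism} do appear to transcribe: they use only the degree-support bookkeeping, nonsingularity of square submatrices of $A=(a_{t,p})$, and prime avoidance for the intermediate quotients, none of which is special to face rings. (In Lemma~\ref{ineq} one sums over all $F\subseteq[n]$ rather than $F\in\Delta$, allowing zero coefficient spaces; in Case~1 every $t\notin F$ then gives $T+e_t\in\W_{i+1,F\cup\{t\}}$, so the reduction to the $m\times m$ system persists.) If carried out, this would even be stronger than the conjectured Hilbert-function identity, since it produces the module isomorphism $H^\ell_\mm(M/\Theta M)_{-i}\cong\ker^{\ell+m}_{m,i+m-1}$.

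Two cautions before you can claim a proof. First, your fallback derivation of the structure theorem ``by induction along the squarefree filtration~\eqref{e:M} using long exact sequences'' is not adequate as stated: such an induction determines the graded pieces only up to extension problems, and the identity-versus-corestriction identification of the $x_\ell$-action---precisely the datum the kernel argument consumes---does not come for free from it; you should rely on the Matlis-dual/Yanagawa route, where the action is forced by squarefreeness of $\ext^{n-j}_S(M,\omega_S)$, and normalize the isomorphisms $H^j_\mm(M)_{-U}\cong H^j_F(M)$, $F=s(U)$, via multiplication by monomials so that ``identity'' is literally correct. Second, what you have written is a plan rather than a proof: the assertion that the arguments of Section~\ref{proof} ``transcribe'' must actually be executed, including the base case analogous to eq.~(\ref{Hochs}), the verification that Cases~1 and~2 of Lemma~\ref{ineq} nowhere use that the nonzero supports form a simplicial complex, and the prime-avoidance choice of $\theta_{m+1}$ relative to $\theta_1,\ldots,\theta_m$ being compatible with fixing the generic sequence $\Theta$ in advance. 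These checks look routine, but until they are written out the conjecture should still be regarded as open.
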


\begin{remark}
The conjecture easily reduces to a more precise version of
Proposition~\ref{p:ext}: we need $R \otimes_S
\ext^\ell_S(M,\omega_S)(m)$ and $\ext^\ell_R(R \otimes_S M,\omega_R)$
to be isomorphic in all positive $\ZZ$-graded degrees when $M$ is a
squarefree module.  This probably follows from the full strength of
Lemma~\ref{l:tor} (only the case $t = 1$ was used in the proof of
Proposition~\ref{p:sqfree}) and a comparison between $\ext^\ell_R(R
\otimes_S M,\omega_R)$ and the cohomology of the total complex of
$\Hom_S(K_\spot \otimes F_\spot,\omega_S)(m)$, where $K_\spot =
K_\spot(\Theta)$ is the Koszul complex of $\theta_1,\ldots,\theta_m$
and $F_\spot$ is a resolution of~$M$ by graded free $S$-modules.
However, the derivation of Theorem~\ref{main} in Section~\ref{proof}
only requires the weakened version of Theorem~\ref{isomorphism} that
is an immediate consequence of Theorem~\ref{t:sqfree} as stated.
Therefore, since the conjecture is not central to our conclusions
regarding finite local cohomology, we leave it open.
\end{remark}

\end{document}